\documentclass{article}

\usepackage{authblk}
\title{Restrictions of Infinite Circuits}
\author{Evan Leach}
\affil{Department of Mathematics, UCLA}
\date{September 12, 2026}

\usepackage[letterpaper, total={5.5in, 8in}]{geometry}
\usepackage{setspace}
\usepackage{amsmath}
\usepackage{amssymb}
\usepackage{amsthm}
\usepackage{mathrsfs}
\usepackage{enumitem}
\usepackage{forest}
\usepackage{bm}
\usepackage{hyperref}
\hypersetup{
    colorlinks=true,
    linkcolor=blue,
    filecolor=blue,      
    urlcolor=blue,
    citecolor=magenta,
    }

\usepackage[
    backend=biber,
    style=alphabetic,
    maxalphanames=3,
    maxbibnames=99
]{biblatex}

\newcommand{\NN}{\mathbb{N}}

\newcommand{\AND}{\wedge}
\newcommand{\OR}{\vee}
\newcommand{\NOT}{\neg}
\newcommand{\BSigma}{\boldsymbol{\Sigma}}
\newcommand{\BPi}{\boldsymbol{\Pi}}
\newcommand{\BDelta}{\boldsymbol{\Delta}}
\newcommand{\AC}{\mathsf{AC}^0}
\newcommand{\ACd}{\mathsf{AC}^0_d}
\newcommand{\ACdm}{\mathsf{AC}^0_{d-1}}

\DeclareMathOperator{\rank}{rank}

\theoremstyle{plain}
\newtheorem{theorem}{Theorem}[section]

\newtheorem{lemma}[theorem]{Lemma}
\newtheorem{corollary}[theorem]{Corollary}

\theoremstyle{definition}
\newtheorem{definition}[theorem]{Definition}
\newtheorem{proposition}[theorem]{Proposition}
\newtheorem{remark}[theorem]{Remark}
\newtheorem{conjecture}[theorem]{Conjecture}

\begin{document}

\null
\nointerlineskip
\vfill
\let\snewpage \newpage
\let\newpage \relax
\maketitle

\vspace{2cm}

\begin{abstract}

We present a new approach to lower bounds on Borel rank via a connection between Borel sets and infinite circuits. We prove an infinite analog to Håstad's switching lemma called the \emph{restriction lemma}, which shows that by fixing the values of some inputs, we can simultaneously reduce the complexity of one Borel function while largely maintaining the complexity of another. This result provides a purely combinatorial proof of the Borel hierarchy theorem, as well as simple proofs of various Ramsey-like properties of Borel sets and functions. We prove that the restriction lemma is sharp and also discuss counterexamples demonstrating the limitations of the restriction technique, in the context of both Borel functions and $\AC$ circuit families.

\end{abstract}

\vspace{2cm}

\let \newpage \snewpage
\vfill 

\newpage

\tableofcontents

\newpage

\section{Introduction}

Proving lower bounds on Borel rank relies on the Borel hierarchy theorem and its proof via universal sets and diagonalization. In this paper, we give a more concrete alternative approach based on a correspondence between Borel sets and infinite circuits. Along with proving the Borel hierarchy theorem, our techniques give simple proofs of several Ramsey-like properties of Borel sets and functions.

An infinite circuit is a countable, directed, well-founded graph with vertices labeled as $\AND$-gates, $\OR$-gates, literals $x_i$, and negated literals $\overline{x}_i$. These circuits compute maps which take a string $x \in 2^\NN$ as input and output the evaluation of some designated \emph{output gate} to $0$ or $1$. The functions $f \colon 2^\NN \rightarrow \{0,1\}$ computed by infinite circuits are precisely the characteristic functions of Borel sets $B \subseteq 2^\NN$ (see Section~\ref{sec:correspondence} for a proof). Moreover, the Borel rank of $B$ is closely related to the depth of the corresponding circuit.

Many classical theorems about Borel sets have analogous results for $\AC$, the class of functions $f \colon 2^{<\NN} \rightarrow \{0,1\}$ computable by polynomial-sized circuit families with bounded depth. The Borel hierarchy theorem corresponds to Sipser's result \cite{sipser1983} that, for any $d \in \NN$, there are functions computed by polynomial-sized circuits of depth $d+1$ but not by polynomial-sized circuits of depth $d$. The fact that no infinite parity function $f \colon 2^\NN \rightarrow \{0,1\}$ (a function whose output flips whenever a single input bit is flipped) is Borel corresponds to Furst, Saxe, and Sipser's result \cite{parity1981} that parity is not in $\AC$. Both of these theorems about finite circuits were originally proven using inspiration from the analogous infinitary result \cite{parity1981, sipser1983, sipser1984}.

\begin{figure}[b]
    \centering
    \begin{forest}
      for tree={
        math content,
        s sep=3mm, 
        l sep=12mm, 
        tier/.option=level,
        edge={thick}
      },
      gate/.style={circle, draw, thick, fill=gray!20, inner sep=0pt, minimum size=6mm},
      ellipsis child/.style={
        edge path={
          \noexpand\draw[\forestoption{edge}] (!u) -- ($()!4.5mm!(!u)$);
          \noexpand\fill ($()!3mm!(!u)$) circle (0.7pt);
          \noexpand\fill ($()!1.5mm!(!u)$) circle (0.7pt);
          \noexpand\fill () circle (0.7pt);
        }
      }
      [\bm{\OR}, gate
        [\bm{\AND}, gate
          [x_0]
          [x_2]
          [x_3]
          [, ellipsis child]
          [, ellipsis child]
        ]
        [\bm{\AND}, gate
          [x_1]
          [x_6]
          [x_7]
          [, ellipsis child]
          [, ellipsis child]
        ]
        [\bm{\AND}, gate
          [x_4]
          [x_5]
          [x_8]
          [, ellipsis child]
          [, ellipsis child]
        ]
        [, coordinate, no edge]
        [, coordinate, no edge]
        [, coordinate, no edge]
        [, coordinate, no edge]
        [, coordinate, no edge]
        [, ellipsis child]
        [, coordinate, no edge]
        [, coordinate, no edge]
        [, coordinate, no edge]
        [, coordinate, no edge]
        [, ellipsis child]
      ]
    \end{forest}
    \caption{A circuit computing a $2$-tree function. Each gate has countably infinite fan-in.}
    \label{fig:alternating_tree}
\end{figure}

The sharpest versions of the $\AC$ depth hierarchy theorem and parity lower bounds were proven by Håstad in \cite{hastad} using a theorem called the \emph{switching lemma}, a cornerstone result in circuit complexity \cite{arora2009, brustmann1987, servedio2022, lyu2022}. The switching lemma essentially states that, by fixing a reasonably small number of input bits to $0$ or $1$, one can dramatically simplify a circuit. Such a partial setting of the input bits is called a \emph{restriction}, and we write $f^\rho$ to denote the application of a restriction $\rho$ to a function $f$.

In this paper, we prove an infinite analog of the switching lemma, which we call the \emph{restriction lemma}. Our result concerns the \emph{$\gamma$-tree functions}, which are computed by circuits obtained from labeling the vertices of a fully-branching tree of rank $\gamma < \omega_1$ with alternating $\AND$- and $\OR$-gates (see Section~\ref{sec:alternating-trees} for a complete definition and Figure~\ref{fig:alternating_tree} for an example). These functions compute canonical $\BSigma^0_\gamma$- and $\BPi^0_\gamma$-complete sets. The restriction lemma gives us a way to simultaneously reduce the complexity of a $\BSigma^0_{\alpha+1}$-measurable function $f \colon 2^\NN \rightarrow 2^\NN$ and an $(\alpha+\beta)$-tree function $g \colon 2^\NN \rightarrow \{0,1\}$ by applying a restriction. The precise statement of the theorem is as follows:

\begin{theorem}[The restriction lemma]
\label{restriction}

For any ordinals $\alpha, \beta < \omega_1$, $\BSigma^0_{\alpha+1}$-measurable function $f \colon 2^\NN \rightarrow 2^\NN$, and $(\alpha+\beta)$-tree function $g \colon 2^\NN \rightarrow \{0,1\}$, there exists a restriction $\rho$ such that $f^\rho$ is continuous and $g^\rho$ is a $\beta$-tree function.

\end{theorem}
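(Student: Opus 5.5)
The plan is transfinite induction on $\alpha$, with $\beta$ arbitrary, building $\rho$ as the limit of a fusion sequence of restrictions. I take a restriction to be a partial map $\rho\colon \NN \rightharpoonup \{0,1\}$ whose set of free coordinates is infinite, so that $f^\rho$ and $g^\rho$ may be viewed as functions on $2^\NN$ via the increasing enumeration of the free coordinates. I also assume the natural notion of tree function from Section~\ref{sec:alternating-trees}. A $\gamma$-tree function is an alternating $\AND$/$\OR$ tree of rank $\gamma$ with infinite fan-in and distinct literals at the leaves. Such a tree is built from countably many subtrees of ranks cofinal below $\gamma$ (or of rank $\gamma-1$ when $\gamma$ is a successor), and these subtrees read pairwise disjoint sets of variables. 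Two facts about restrictions will be used repeatedly:
\begin{enumerate}[label=(\roman*)]
\item fixing a subtree to its non-absorbing constant (for instance, a child of an $\AND$ to $1$) simply deletes it;
\item fixing all but infinitely many children of a gate to the non-absorbing value, and leaving the remaining subtrees alone, preserves the tree shape.
\end{enumerate}

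\textbf{Base case $\alpha=0$.} A $\BSigma^0_1$-measurable $f$ is already continuous. For $\beta\ge1$ I take $\rho$ empty. Strictly speaking, $\alpha+\beta=\beta$ only for $\alpha=0$, so in general the claim is about peeling $\alpha$ levels off the bottom of the tree. The real base step is therefore $\alpha=1$, $\beta$ arbitrary: $f$ is $\BSigma^0_2$-measurable and $g$ is a $(1+\beta)$-tree function.

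\textbf{Step $\alpha=1$.} Each coordinate $f_n$ is $\BDelta^0_2$, so each $f_n^{-1}(1)$ is a countable union of closed sets $C_{n,k}$, and likewise for $f_n^{-1}(0)$. I will handle these countably many closed pieces one at a time, interleaved with the work on $g$. The $(1+\beta)$-tree has bottom-level gates that are infinite $\AND$s or $\OR$s of literals. I plan to restrict each bottom gate so that it becomes a single literal: fix all but one of its variables to the non-absorbing value. This turns $g$ into a $\beta$-tree function on the surviving variables, one per bottom gate.

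Meanwhile, to make $f^\rho$ continuous I must ensure that each $f_n^\rho$ depends only on finitely many surviving variables, in a locally constant way. I do this with a fusion (Baire-category-style) construction. At stage $s$ I have committed finitely many surviving variables $v_0,\dots,v_{s-1}$ and a cofinite-in-each-gate family of still-undecided variables. For each of the $2^s$ assignments $\sigma$ to $v_0,\dots,v_{s-1}$ and each $n\le s$, I extend the current partial restriction on the undecided part so that, below $\sigma$, the value of $f_n$ is decided on an open set. Because $f_n$ is $\BDelta^0_2$, its preimages are $\BSigma^0_2$ and $\BPi^0_2$ and the space is a Baire space, so some basic open set lies inside one of the closed pieces $C_{n,k}$; this is the key use of $\BSigma^0_2$. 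Each such extension touches only finitely many variables and chooses values for them. I must make sure those values never set an entire bottom gate to its absorbing value, and never kill the intended survivor of a gate.

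\textbf{Main obstacle.} This is the crux: the open set found by the Baire category argument may demand that some variable of a gate take the absorbing value, which would collapse that gate. To handle it I would first try two remedies. The first is to choose the survivor of each gate late, as a variable not yet mentioned. The second is to allow collapsing a gate and compensate at the next level up: a collapsed child of an $\OR$ that becomes $1$ is fatal, but a child collapsing to the non-absorbing value is harmless by fact (i), and since each parent has infinitely many children, only finitely many are touched at any stage. I would then dovetail over gates so that every gate of the tree, at every level, retains infinitely many intact children, which by fact (ii) preserves the tree shape.

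\textbf{Successor and limit steps.} For a successor $\alpha+1$, a $\BSigma^0_{\alpha+2}$-measurable $f$ is $\BSigma^0_2$-measurable relative to a $\BSigma^0_{\alpha+1}$-measurable $h$. Concretely, $f=f'\circ h$ with $f'$ of class $\BSigma^0_2$, obtained by coding countably many $\BDelta^0_{\alpha+1}$ sets into $h$. I apply the induction hypothesis to $h$ and the $(\alpha+1+\beta)$-tree function $g$ with the split $\alpha+(1+\beta)$, which makes $h$ continuous and $g$ a $(1+\beta)$-tree function. Then $f'\circ h^\rho$ is $\BSigma^0_2$-measurable, and the $\alpha=1$ case finishes. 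At a limit $\alpha$, I write $f$ via sets of rank below $\alpha$ and handle the pieces in a fusion over $\alpha_k\nearrow\alpha$, again keeping the tree shape by fact (ii).
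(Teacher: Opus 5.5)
\textbf{There is a genuine gap: the limit step.} This is where the entire difficulty of the theorem lies, and the sentence ``handle the pieces in a fusion over $\alpha_k\nearrow\alpha$'' does not work with the theorem itself as the induction hypothesis. There are two independent reasons.

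\emph{Rank bookkeeping.} Each application of the hypothesis to one piece consumes that piece's rank from $g$ and says nothing about the other pieces. Take $\alpha=\omega\cdot 2$, $\beta=1$, and pieces of ranks $\omega,\omega+1,\omega+2,\dots$. Handling the rank-$\omega$ piece leaves $g$ at best an $(\omega+1)$-tree, since $\omega+\gamma=\omega\cdot 2+1$ forces $\gamma=\omega+1$. A piece that still has rank $\omega+1$ then collapses $g$ to a single literal, and the remaining pieces cannot be handled at all. So you must lower all pieces simultaneously. The paper does this at stage $s$ in several moves:
\begin{itemize}
\item it makes every subcircuit of rank at most $\beta_s=\rank(D_s)$ of every $D_m$ finite, so that $D_m$ drops from $\beta_s+\delta_m$ to $\delta_m$ by Lemma~\ref{killing-leaves};
\item it suppresses the children $S_{ijk}$ of rank at most $\beta_s$;
\item it peels a $T_{\beta_s}$ layer off the remaining children;
\item it maintains the invariant $\rank(D_m)<\limsup_k\rank(S_{ijk})$.
\end{itemize}

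\emph{Control of the limit.} The theorem only tells you that $g^\rho$ is \emph{some} $\gamma$-tree function. It gives no information about which variables stay free or which subtrees survive. So even when ranks are harmless, nothing stops the limit of $\rho_0\preceq\rho_1\preceq\cdots$ from fixing every variable. An example is $\alpha=\omega$ with pieces of finite rank $k$, where $k+(\omega+1)=\omega+1$. The paper overcomes this with two ingredients:
\begin{itemize}
\item a stronger, structural inductive hypothesis, Lemma~\ref{inductive-restriction}: countably many circuits and grouped trees, each tree suppressed or preserved, with infinitely many preserved per group;
\item permanently preserved variables $x_{n_s}$. These are protected because at stage $s$ the hypothesis is applied only to the instantiations $D^{\sigma_0\cdots\sigma_s}$, over all $2^{s+1}$ settings, and to subtrees containing no permanently preserved variable.
\end{itemize}
In the limit each $T_{ij}$ then keeps infinitely many preserved children and is a rank-$1$ tree, and the $\alpha=1$ case finishes. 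Nothing in your sketch plays these roles, so as written you only obtain the theorem for finite $\alpha$.

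\textbf{The rest is in good shape.} Your successor step is correct and is a genuinely different route from the paper's. Factoring through a $\BSigma^0_{\alpha+1}$-measurable $h$ and using $\alpha+1+\beta=\alpha+(1+\beta)$ lets the theorem serve as its own induction hypothesis there. One small imprecision: $f'$ is only guaranteed to be $\BDelta^0_2$ on the range of $h$. That suffices, because each coordinate of $f$ is $h^{-1}(U)=h^{-1}(V)$ with $U\in\BSigma^0_2$ and $V\in\BPi^0_2$.

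Your $\alpha=1$ step works along the lines of your second remedy. That remedy is essentially the paper's $\alpha=1$ argument, with Baire category in place of forcing a finite child. Three points make it go through:
\begin{itemize}
\item Your construction can force a bottom gate only to its own absorbing value. That value merely suppresses the gate under its parent.
\item When you commit a survivor, immediately fix the rest of its gate. Later Baire-category extensions fix only finitely many free coordinates of a subcube that is still homeomorphic to $2^\NN$, so they cannot kill it.
\item First restrict $g$ to the shape $T_1+T_\beta\le T_{1+\beta}$ (Lemmas~\ref{embedding} and~\ref{subtree-restriction}). For limit $\gamma$ the root of $T_\gamma$ has leaf children, and fixing one of them to the root's absorbing value makes $g$ constant.
\end{itemize}
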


When $\beta \geq 1$ and the codomain of $f$ is $\{0,1\}$ instead of $2^\NN$, we can choose $\rho$ such that $f^\rho$ is constant, rather than merely continuous:

\begin{corollary}
\label{constant-restriction}

For any ordinals $\alpha, \beta < \omega_1$ with $\beta \geq 1$, $\BSigma^0_{\alpha+1}$-measurable function $f \colon 2^\NN \rightarrow \{0,1\}$, and $(\alpha + \beta)$-tree function $g \colon 2^\NN \rightarrow \{0,1\}$, there exists a restriction $\rho$ such that $f^\rho$ is constant and $g^\rho$ is a $\beta$-tree function.

\end{corollary}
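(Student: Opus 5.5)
We derive the corollary from Theorem~\ref{restriction} in two rounds of restriction: first make $f$ continuous, then freeze a few more bits to make it constant while keeping $g$ a tree function.

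\textbf{First round.} Regard $f$ as a map into $2^\NN$, say by composing with the inclusion $\{0,1\} \hookrightarrow 2^\NN$ sending $i$ to the constant sequence $i$. This composition is still $\BSigma^0_{\alpha+1}$-measurable. Theorem~\ref{restriction} then gives a restriction $\rho_1$ such that $f^{\rho_1}$ is continuous and $g^{\rho_1}$ is a $\beta$-tree function.

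\textbf{Second round.} A continuous map $2^\NN \to \{0,1\}$ on the compact space of unrestricted coordinates is locally constant. Hence there is a finite partial assignment $\sigma$ to finitely many free coordinates such that $f^{\rho_1}$ is constant on the cylinder determined by $\sigma$. Set $\rho = \rho_1 \cup \sigma$; then $f^\rho$ is constant. Any further restriction extending $\sigma$ also keeps $f$ constant, so we may fix extra finitely many bits if the tree step requires it.

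\textbf{Main obstacle: keeping $g$ a $\beta$-tree function after fixing finitely many bits.} We must show that for a $\beta$-tree function $h$ with $\beta \ge 1$ and a finite partial assignment $\sigma$, some extension $\sigma' \supseteq \sigma$ (still finite, or at least still extending $\sigma$) makes $h^{\sigma'}$ a $\beta$-tree function. I would argue by the structure of alternating trees.
\begin{itemize}
\item The root of the $\beta$-tree circuit has infinitely many children, each computing a subtree of the appropriate rank, with all subtrees on disjoint sets of variables. The finitely many fixed bits touch only finitely many of these subtrees.
\item For each touched child, extend $\sigma$ on that child's remaining variables so that the child evaluates to the identity element of the root gate: $0$ for an $\OR$-root, $1$ for an $\AND$-root. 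This is always possible because each child is a nonconstant tree function on its own variables, and $\sigma$ fixes only finitely many of them, so some completion achieves either value. The exception is a rank-$0$ child, a single literal, which may already be fixed to the absorbing value. To avoid this case, choose $\sigma$ inside a cylinder disjoint from such literals when possible, or note that in the rank-$0$ case $f^{\rho_1}$'s constancy neighborhood can be shrunk by choosing $\sigma$ compatibly. Since $f^{\rho_1}$ is locally constant, any neighborhood works, and we may pick its values by extending to the identity value.
\item Deleting the touched children leaves infinitely many untouched children at each rank required by the definition of the fully branching tree. The result is therefore again a $\beta$-tree function, up to relabeling variables, which is the form of equivalence a restriction already allows.
\end{itemize}

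This is where $\beta \ge 1$ is used: the root has infinite fan-in, so there are children we can spare. When $\beta = 0$, $g$ is a single literal, and fixing bits to make $f$ constant may kill it.
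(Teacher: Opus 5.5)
Your proof is correct. Its first round is the paper's: the paper pads $f$ as $\tilde f(x) = f(x)000\cdots$ rather than using the constant sequence, which makes no difference.

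The routes part in the second round. The paper notes that $f^{\tilde\rho}$ is computed by a finite circuit (Corollary~\ref{continuous-circuits}), so it depends on only finitely many variables. Since \emph{every} assignment to those variables makes $f$ constant, their values can be chosen purely for $g$'s sake: each one is set so that its own input gate is suppressed. A gate of a full tree with a leaf child has rank $1$ or limit rank, so it has $\omega$ leaf children. Deleting finitely many suppressed leaves therefore leaves a copy of $T_\beta$, at the cost of only finitely many extra bits. You instead pick a constancy cylinder first and then repair the damage at the root, suppressing each touched child of the root wholesale. This works, but it is what creates your rank-$0$ exception. It also generally fixes infinitely many further bits rather than ``finitely many'': to suppress a touched child of rank $\gamma+1$ with $\gamma \geq 1$, some rank-$\gamma$ grandchild must be forced to its identity value. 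That is harmless only because any extension of $\sigma$ keeps $f$ constant.

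Two justifications need repair.
\begin{itemize}
\item ``Some completion achieves either value'' is false: a rank-$1$ $\AND$-child with one literal fixed to $0$ can never be made $1$. What you need, and what is true, is narrower. A touched child of rank $\geq 1$ can be driven to its own absorbing value, which equals the root's identity. Set every variable of one of its infinitely many untouched children to that value; this works because these circuits use only positive literals.
\item Choosing $\sigma$ to avoid the root's leaf variables is not always possible, for instance when $f^{\rho_1}$ is one of them. The clean version is to fix in advance a point $y$ that gives each leaf child of the root the root's identity value. Continuity at $y$ then yields a finite restriction $\sigma$ of $y$ with $f^{\rho_1}$ constant on its cylinder.
\end{itemize}
If you take $y$ to suppress \emph{every} leaf of the tree, the root-level repair becomes unnecessary and your argument reduces to the paper's.
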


While Sipser proved a version of these results for finite-depth circuits \cite{sipser1983}, his techniques fundamentally break down at the transfinite level. Roughly speaking, he provides a construction which reduces the rank of $f$ and $g$ by $1$, but iterating his argument $\omega$-many times causes the complexity of $g$ to completely collapse. Our proof circumvents these difficulties using a new inductive hypothesis which reduces the complexity of $f$ and $g$ by a possibly infinite amount. Our argument also requires a new technique for finding restrictions at limit stages of the induction. As was the case for Håstad's switching lemma in finite circuit complexity, the restriction lemma establishes a variety of results in descriptive set theory, including the Borel hierarchy theorem and the fact that no infinite parity function is Borel.

Before the modern proof of the Borel hierarchy theorem via universal sets and diagonalization, Baire established separations between the first several levels of the Borel hierarchy using concrete arguments involving Baire classes of functions \cite{baire1899, baire1906}. However, separating higher levels of the hierarchy required increasingly complex arguments, and it took until 1983 for Sipser to give a purely combinatorial proof that $\BSigma^0_n \neq \BPi^0_n$ for all $1 \leq n < \omega$ \cite{sipser1983}. The restriction lemma provides the first purely combinatorial proof of the full Borel hierarchy theorem by establishing that $\BSigma^0_\alpha \neq \BPi^0_\alpha$ for all $1 \leq \alpha < \omega_1$ without universal sets or diagonalization.

The restriction lemma lies at the intersection of circuit complexity (as an infinite version of Håstad's switching lemma, see also Section~\ref{sec:parity} and Section~\ref{sec:ac-limitations}), infinite dimensional Ramsey theory (see Theorem~\ref{continuous-subcube} and Theorem~\ref{constant-subcube}), and the French school of descriptive set theory analyzing the relationship between various levels of the Borel hierarchy (we will see in Section~\ref{sec:sharpness} that the restriction lemma proves results similar to the corollaries of Louveau and Saint Raymond's theorem in \cite{louveau1987}).

\subsection{Outline of the paper}

We begin with a review of Borel sets and functions in Section~\ref{sec:preliminaries}, and we define the notion of an infinite circuit in Section~\ref{sec:infinite-circuits}. We establish the key connection between Borel sets and circuits in Section~\ref{sec:correspondence}. We define tree functions in Section~\ref{sec:alternating-trees} and restrictions (along with some important related terminology) in Section~\ref{sec:restrictions}. Section~\ref{sec:restriction-lemma} features the proofs of Theorem~\ref{restriction} and Corollary~\ref{constant-restriction}.

We explore the consequences of the restriction lemma in Section~\ref{sec:consequences}, beginning with a new proof of the Borel hierarchy theorem in Section~\ref{sec:hierarchy}. We prove in Section~\ref{sec:constant} that any Borel function $f \colon 2^\NN \rightarrow 2^\NN$ can be made continuous by applying a restriction preserving infinitely many variables, and we use this to show that no infinite parity function is Borel in Section~\ref{sec:parity}. We also give a generalization of the restriction lemma to arbitrary Borel functions in Section~\ref{sec:restriction-for-non-trees}, rather than just $\gamma$-tree functions. This requires us to broaden the class of restrictions we allow, and we show that the theorem fails without this relaxation. We also discuss similar limitations in applying restrictions to functions in $\AC$.

We prove the sharpness of the restriction lemma in its strongest form (Theorem~\ref{restriction}) in Section~\ref{sec:sharpness}, and we discuss how the weaker form (Corollary~\ref{constant-restriction}) is not sharp. We introduce an open conjecture regarding how far Corollary~\ref{constant-restriction} is from optimal, and we provide some partial progress towards a resolution.

\section{Borel sets and infinite circuits}
\label{sec:borel-and-circuits}

Define $2^{<\NN} = \bigcup_{n \in \NN} \{0,1\}^n$, and let $w_0w_1 \cdots w_{k-1}w_k$ denote the concatenation of strings $w_0,w_1, \dots, w_{k-1} \in 2^{<\NN}$ and $w_k \in 2^{<\NN} \cup 2^\NN$. We consider the \emph{Cantor space} $2^\NN$, equipped with the topology generated by basic open sets of the form $w2^\NN$ for $w \in 2^{<\NN}$.

\subsection{Preliminaries}
\label{sec:preliminaries}

We first recall some standard definitions from descriptive set theory about Borel sets and functions. See \cite{kechris1995} for a detailed treatment.

\begin{definition}

A set $S \subseteq 2^\NN$ is called $\BSigma^0_0$ or $\BPi^0_0$ if it is clopen. Given $1 \leq \alpha < \omega_1$, we call a set $\BSigma^0_\alpha$ (resp.\ $\BPi^0_\alpha$) if it is a countable union (resp.\ intersection) of sets $S_n$ for $n \in \NN$, each of which is in $\BPi^0_{\beta_n}$ (resp.\ $\BSigma^0_{\beta_n}$) for some $\beta_n < \alpha$. We call a set $\BDelta^0_\alpha$ if it is simultaneously $\BSigma^0_\alpha$ and $\BPi^0_\alpha$.

\end{definition}

\begin{remark}

This definition agrees with the usual definition of the Borel hierarchy since the Cantor space is zero-dimensional (has a clopen basis). In particular, the open (resp.\ closed) sets of $2^\NN$ are precisely the countable unions (resp.\ intersections) of clopen sets.

\end{remark}

\begin{definition}

A set $S \subseteq 2^\NN$ is \emph{Borel} if it is $\BSigma^0_\alpha$ for some $\alpha < \omega_1$. We let $\mathcal{B}$ denote the class of Borel sets. The \emph{rank} of a Borel set $B$ is the smallest $\alpha$ such that $B \in \BSigma^0_\alpha \cup \BPi^0_\alpha$.

\end{definition}

\begin{definition}

Given a class $\mathcal{C} \subseteq \mathcal{P}(2^\NN)$, we call a function $f \colon 2^\NN \rightarrow Y$ (where $Y = 2^\NN$ or $\{0,1\}$) \emph{$\mathcal{C}$-measurable} if $f^{-1}(S) \in \mathcal{C}$ whenever $S \subseteq Y$ is open.

\end{definition}

\begin{definition}

A function $f \colon 2^\NN \rightarrow Y$ is \emph{Borel} if it is $\mathcal{B}$-measurable.

\end{definition}

\begin{remark}

The facts that $2^\NN$ has a countable basis and that $\BSigma^0_\alpha$ is closed under countable unions can be used to show that every Borel function is $\BSigma^0_\alpha$-measurable for some $\alpha < \omega_1$.

\end{remark}

\begin{remark}

Suppose $B$ is a Borel set with rank $\alpha$. Then the characteristic function of $B$ is $\BSigma^0_{\alpha+1}$-measurable, and it is $\BSigma^0_\alpha$-measurable if and only if $B \in \BDelta^0_\alpha$.

\end{remark}

\begin{definition}

Given a class $\mathcal{C}$, we call a set $S \subseteq 2^\NN$ \emph{$\mathcal{C}$-hard} if for every $S' \in \mathcal{C}$, there exists a continuous map $\varphi \colon 2^\NN \rightarrow 2^\NN$ such that $x \in S' \iff \varphi(x) \in S$. We call $\varphi$ a \emph{continuous reduction.} We call a $\mathcal{C}$-hard set $S \subseteq 2^\NN$ \emph{$\mathcal{C}$-complete} if $S \in \mathcal{C}$.

\end{definition}

\subsection{Infinite circuits}
\label{sec:infinite-circuits}

\begin{definition}

A \emph{circuit} $C$ on variables $x_0, x_1, \dots$ is a countable, labeled, and directed graph satisfying the following conditions:
\begin{itemize}
    \item We call the vertices of $C$ \emph{gates}, and if there is an edge from a gate $G'$ to a gate $G$, we say that $G'$ is a \emph{child} of $G$. A circuit must be well-founded under the child relation.
    \item Any gate with fan-in (i.e.\ in-degree) at least $1$ is called an \emph{internal gate} and must be labeled with either the symbol $\AND$ or $\OR$.
    \item The remaining nodes are called \emph{input gates} and must be labeled with either $0$, $1$, or the symbol $x_i$ or $\overline{x}_i$ for some $i \in \NN$ (denoting either the $i$-th bit of $x$ or the negation of this bit). We call the labels $0,1,x_i,\overline{x}_i$ \emph{literals}.
    \item One vertex $G_{\text{out}}$ of $C$ must be designated as the \emph{output gate}.
\end{itemize}

\end{definition}

\begin{definition}

Given a gate $G$ in $C$ and some input $x \in 2^\NN$, we define the \emph{evaluation} $G(x)$ inductively (which we may do by well-foundedness). An input gate evaluates to its label, an $\AND$-gate evaluates to the conjunction of the evaluations of its children, and an $\OR$-gate evaluates to the disjunction of the evaluations of its children. We write $C(x) = G_\text{out}(x)$, so that $C$ computes a function from $2^\NN$ to $\{0,1\}$. We call two circuits \emph{equivalent} if they compute the same function.

\end{definition}

\begin{definition}

We say that a function $f \colon 2^\NN \rightarrow \{0,1\}$ \emph{computes} the set $f^{-1}(\{1\})$.

\end{definition}

\begin{remark}

Allowing for $\NOT$-gates would not increase the power of these circuits, since De Morgan's law allows us to propagate all negations to the bottom layer of the circuit.

\end{remark}

\begin{definition}

We call a circuit $C$ \emph{alternating} if no $\OR$-gate in $C$ has another $\OR$-gate as a child and no $\AND$-gate in $C$ has another $\AND$-gate as a child.

\end{definition}

\begin{remark}
\label{circuit-simplification}

By merging non-alternating gates, we see that every circuit is equivalent to an alternating circuit.

\end{remark}

\begin{definition}

Let $G$ be a gate of a circuit $C$. The \emph{subcircuit of $C$ rooted at $G$} is the subgraph of $C$ induced by $G$ and its descendants (where the descendant relation is the transitive closure of the child relation), with $G$ designated as the output gate.

\end{definition}

\begin{definition}

A \emph{child} of a circuit $C$ is a subcircuit of $C$ rooted at a child of the output gate of $C$.

\end{definition}

\begin{definition}

We call an alternating circuit $\BSigma_0$ or $\BPi_0$ if it is finite. We call an alternating circuit $C$ a \emph{$\BSigma_\alpha$-circuit (resp.\ $\BPi_\alpha$-circuit)} if its output gate is an $\OR$ (resp.\ $\AND$)-gate and, for each child $C'$ of $C$, there exists $\beta < \alpha$ such that $C'$ is a $\BPi_\beta$ (resp.\ $\BSigma_\beta$)-circuit.

\end{definition}

\begin{definition}

The \emph{rank} of an alternating circuit $C$ is the least ordinal $\alpha$ such that $C$ is a $\BSigma_\alpha$- or $\BPi_\alpha$-circuit.

\end{definition}

\begin{remark}

Remark \ref{circuit-simplification} tells us that every circuit is equivalent to a rank-$\alpha$ circuit for some $\alpha < \omega_1$. By inductively duplicating shared subcircuits, we see that every rank-$\alpha$ circuit is equivalent to a rank-$\alpha$ circuit whose underlying graph is a tree.

\end{remark}

\subsection{Equivalence of Borel sets and circuits}
\label{sec:correspondence}

Our similar choice of notation between Borel sets and circuits is deliberate, as these notions correspond precisely. The following result and its corollaries allow us to describe Borel subsets of $2^\NN$ using circuits.

\begin{lemma}
\label{clopen}

A set $S \subseteq 2^\NN$ is clopen if and only if it is computed by a finite circuit.

\end{lemma}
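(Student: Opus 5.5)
We prove both directions separately, using compactness of $2^\NN$ for the harder direction.

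\emph{Finite circuits compute clopen sets.} Argue by induction on the well-founded child relation, here simply on the finite structure of the circuit. For each gate $G$ we show that $G^{-1}(\{1\}) = \{x : G(x) = 1\}$ is clopen.
\begin{itemize}
    \item An input gate labeled $0$ or $1$ computes $\emptyset$ or $2^\NN$.
    \item An input gate labeled $x_i$ computes the set $\{x : x_i = 1\}$. This is the finite union of the basic open sets $w1 2^\NN$ over $w \in \{0,1\}^i$. Its complement $\{x : x_i = 0\}$ is open for the same reason, so the set is clopen. The same holds for $\overline{x}_i$.
    \item An $\AND$-gate (resp.\ $\OR$-gate) has finitely many children, since the circuit is finite. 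It therefore computes a finite intersection (resp.\ union) of clopen sets, which is clopen.
\end{itemize}
Applying this to $G_{\text{out}}$ gives the first direction.

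\emph{Clopen sets are computed by finite circuits.} Let $S$ be clopen.
\begin{itemize}
    \item Since $S$ is open, $S = \bigcup_{w \in W} w2^\NN$ for some $W \subseteq 2^{<\NN}$.
    \item Since $S$ is closed in the compact space $2^\NN$, it is compact. Hence finitely many of these basic sets already cover it: $S = w^{(1)}2^\NN \cup \dots \cup w^{(k)}2^\NN$.
    \item Each cylinder $w2^\NN$ with $w = w_0\cdots w_{n-1}$ is computed by a single $\AND$-gate. Its children are the literals $x_j$ for positions with $w_j = 1$ and $\overline{x}_j$ for positions with $w_j = 0$. When $w$ is empty, we use the input gate labeled $1$ instead.
    \item Take an $\OR$-gate over these $k$ $\AND$-gates. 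If $k = 0$, use the input gate labeled $0$.
\end{itemize}
The result is a finite circuit computing $S$.

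The only nontrivial step is the compactness argument, which reduces an arbitrary open cover of $S$ to a finite one. Everything else is routine bookkeeping. We also check that the degenerate cases ($S = \emptyset$, $S = 2^\NN$, the empty string) are handled by the constant literals, so that every internal gate really has fan-in at least $1$ as the definition requires.
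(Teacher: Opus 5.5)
Your proof is correct and follows the same route as the paper. For clopen $\Rightarrow$ finite circuit, both use compactness to reduce $S$ to finitely many cylinders, each computed by a finite conjunction of literals, joined by a finite disjunction. For the converse, both note that single input gates compute clopen sets and that clopen sets are closed under finite unions and intersections. Your explicit handling of the degenerate cases ($k=0$, empty $w$), which the paper leaves implicit, is a small improvement.
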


\begin{proof}

Any open set $S \subseteq 2^\NN$ can be written as a union of basic open sets, and if $S$ is also closed, this union can be taken to be finite by compactness. Each basic open set in such a union is computed by a finite circuit, so the forward direction follows by taking a finite disjunction of such circuits.

Conversely, any circuit consisting of a single input gate computes either $\emptyset$, $2^\NN$, or a set of the form $\{x \in 2^\NN : x_i = b\}$ for some $i \in \NN$ and $b \in \{0,1\}$. Note that these sets are all clopen. As the clopen sets are closed under finite unions and intersections, this means that any set $S$ computed by a finite circuit is also clopen.
\end{proof}

\begin{corollary}
\label{rank-equivalence}

For $\alpha  < \omega_1$, a set $S \subseteq 2^\NN$ is $\BSigma^0_\alpha$ (resp.\ $\BPi^0_\alpha$) if and only if it is computed by a $\BSigma_\alpha$ (resp.\ $\BPi_\alpha$)-circuit.

\end{corollary}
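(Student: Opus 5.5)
The plan is to prove both directions together by transfinite induction on $\alpha < \omega_1$, with Lemma~\ref{clopen} as the base case. For $\alpha = 0$, the $\BSigma^0_0$ and $\BPi^0_0$ sets are the clopen sets, and the $\BSigma_0$ and $\BPi_0$ circuits are the finite alternating circuits. Lemma~\ref{clopen} says clopen sets are exactly the sets computed by finite circuits. By Remark~\ref{circuit-simplification}, any finite circuit can be converted to an equivalent finite alternating circuit, since merging gates keeps it finite. So the base case is exactly Lemma~\ref{clopen}.

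For $1 \le \alpha < \omega_1$, assume the statement holds for all $\beta < \alpha$. We treat the $\BSigma$ case; the $\BPi$ case is dual, swapping $\OR$ with $\AND$ and unions with intersections.

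For the forward direction, suppose $S = \bigcup_n S_n$ with each $S_n \in \BPi^0_{\beta_n}$ for some $\beta_n < \alpha$. By the induction hypothesis, each $S_n$ is computed by a $\BPi_{\beta_n}$-circuit $C_n$. Take disjoint copies of the $C_n$ and add a new $\OR$ output gate whose children are the output gates of the $C_n$. The resulting graph is countable and well-founded, and it computes $\bigcup_n S_n$.

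It may fail to be alternating, because some $C_n$ may have an $\OR$ output gate. This happens only in the degenerate case $\beta_n = 0$ with a finite circuit. The fix is to replace such a $C_n$ by a single-child $\AND$-gate above it. This gives a circuit of rank at most $1 \le \alpha$ that is still finite below the root. Alternatively, one can note that a clopen set is computed by a finite $\AND$-rooted alternating circuit, using the CNF over finitely many coordinates.

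With that adjustment, every child is a $\BPi_{\beta}$-circuit for some $\beta < \alpha$, so the whole circuit is a $\BSigma_\alpha$-circuit.

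For the converse, let $C$ be a $\BSigma_\alpha$-circuit. Then $C$ computes the union over the children $C'$ of the sets computed by the $C'$. There are countably many children, since $C$ is countable. Each $C'$ is a $\BPi_\beta$-circuit for some $\beta < \alpha$, so by the induction hypothesis it computes a $\BPi^0_\beta$ set. Hence $C$ computes a countable union of such sets, which is $\BSigma^0_\alpha$ by definition.

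There is one subtlety when $\alpha \ge 1$ and $C$ is finite. In that case we use Lemma~\ref{clopen}: the computed set is clopen, and clopen sets lie in every level.

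The main obstacle is this bookkeeping about alternation and the level-$0$ / finite-circuit edge cases. The fix is to observe that clopen sets are in every $\BSigma^0_\beta$ and $\BPi^0_\beta$, and that clopen sets admit finite alternating circuits with either root type, by writing them in DNF or CNF over finitely many coordinates.
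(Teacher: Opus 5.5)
Your proof is correct and matches what the paper intends: the paper gives no separate proof and treats the statement as an immediate consequence of Lemma~\ref{clopen}, namely a transfinite induction with Lemma~\ref{clopen} as the base case, using that the definitions of $\BSigma^0_\alpha$/$\BPi^0_\alpha$ and of $\BSigma_\alpha$/$\BPi_\alpha$-circuits are parallel; your handling of alternation for finite children is bookkeeping the paper leaves implicit. One wording fix: justify the repaired child by its finiteness, which makes it $\BPi_0$ with $0<\alpha$, rather than by ``rank at most $1 \le \alpha$'', because the definition requires a strict inequality and $\alpha=1$ is allowed.
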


\begin{corollary}

A subset of $2^\NN$ is Borel if and only if it is computed by a circuit.

\end{corollary}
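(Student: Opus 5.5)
We prove the corollary that a subset of $2^\NN$ is Borel if and only if it is computed by a circuit, by combining Corollary~\ref{rank-equivalence} with the remarks on alternating circuits and rank. Since Corollary~\ref{rank-equivalence} may be assumed, the argument is short, and there are two directions to check.

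For the forward direction, let $S$ be Borel. By definition, $S \in \BSigma^0_\alpha$ for some $\alpha < \omega_1$. Corollary~\ref{rank-equivalence} then gives a $\BSigma_\alpha$-circuit computing $S$. A $\BSigma_\alpha$-circuit is in particular a circuit: it is an alternating, countable, well-founded labeled graph with a designated output gate.

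For the converse, let $C$ be any circuit computing $S$. By Remark~\ref{circuit-simplification}, $C$ is equivalent to an alternating circuit $C'$. We need $C'$ to have a countable rank, i.e.\ to be a $\BSigma_\alpha$- or $\BPi_\alpha$-circuit for some $\alpha < \omega_1$. This is the step that needs care, and we argue it by induction along the well-founded child relation.
\begin{itemize}
    \item To each gate $G$ we assign the ordinal $\rank(G)$, defined as $0$ if the subcircuit rooted at $G$ is finite. Otherwise it is the least $\alpha$ such that, for every child $G'$ of $G$, we have $\rank(G') < \alpha$, with the appropriate $\BSigma/\BPi$ alternation.
    \item If the subcircuit rooted at $G$ is finite but $G$ has infinitely many children, fan-in alone does not force a problem, since finite subcircuits already receive rank $0$.
    \item Every gate has only countably many children, because $C'$ is countable. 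A countable supremum of countable ordinals is countable, since $\omega_1$ is regular. Hence, by well-founded induction, every gate has countable rank.
\end{itemize}
With this assignment, the subcircuit rooted at $G$ is a $\BSigma_{\rank(G)}$- or $\BPi_{\rank(G)}$-circuit according to its top label. Here an $\OR$-gate's children are $\AND$-gates or literals, and these are $\BPi_\beta$ for some $\beta < \rank(G)$; literals are finite circuits and so are $\BPi_0$.

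Consequently $C'$ is a $\BSigma_\alpha$- or $\BPi_\alpha$-circuit for some countable $\alpha$. Corollary~\ref{rank-equivalence} then shows that $S$ lies in $\BSigma^0_\alpha$ or in $\BPi^0_\alpha$. In the latter case $S \in \BSigma^0_{\alpha+1}$, because a single $\BPi^0_\alpha$ set is trivially a countable union of $\BPi^0_\alpha$ sets. In either case $S$ is Borel.

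The main obstacle is exactly this rank-countability verification. It depends on the countability of the circuit and on the regularity of $\omega_1$; everything else follows formally from Corollary~\ref{rank-equivalence}.
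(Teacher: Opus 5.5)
Your proof is correct and follows the paper's route: the corollary is read off from Corollary~\ref{rank-equivalence} together with the remark that every circuit is equivalent, via Remark~\ref{circuit-simplification}, to an alternating circuit of some countable rank, and in the $\BPi^0_\alpha$ case one passes to $\BSigma^0_{\alpha+1}$. The only difference is that you prove the countable-rank claim by well-founded induction and the regularity of $\omega_1$, whereas the paper just asserts it in a remark. Your bullet about a finite subcircuit whose root has infinitely many children is vacuous, since infinitely many children already make the subcircuit infinite, but it does no harm.
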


We can also characterize Borel functions using circuits:

\begin{lemma}
\label{measurable-circuits}

A function $f \colon 2^\NN \rightarrow 2^\NN$ is $\BSigma^0_\alpha$-measurable if and only if the map $x \mapsto f(x)_i$ is computed by both a $\BSigma_\alpha$-circuit and $\BPi_\alpha$-circuit for every $i \in \NN$.

\end{lemma}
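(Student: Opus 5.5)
The plan is to reduce Lemma~\ref{measurable-circuits} to Corollary~\ref{rank-equivalence}, using the fact that the coordinate sets $U_{i,b} = \{y \in 2^\NN : y_i = b\}$ for $i \in \NN$, $b \in \{0,1\}$ are clopen and form a subbasis for the topology of $2^\NN$. For each $i$, the map $x \mapsto f(x)_i$ is the characteristic function of $f^{-1}(U_{i,1})$. Its complement is $f^{-1}(U_{i,0})$, and the negation of a function computed by a $\BSigma_\alpha$-circuit is computed by a $\BPi_\alpha$-circuit (push negations to the literals by De Morgan, which swaps $\AND$ and $\OR$ and preserves alternation and rank). So the condition in the lemma says exactly that each set $f^{-1}(U_{i,b})$ is computed by both a $\BSigma_\alpha$- and a $\BPi_\alpha$-circuit. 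By Corollary~\ref{rank-equivalence}, this is the same as saying that $f^{-1}(U_{i,b}) \in \BDelta^0_\alpha$ for all $i,b$.

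\textbf{Forward direction.} Suppose $f$ is $\BSigma^0_\alpha$-measurable. Since each $U_{i,b}$ is open, $f^{-1}(U_{i,b}) \in \BSigma^0_\alpha$. Moreover $f^{-1}(U_{i,1})$ is the complement of $f^{-1}(U_{i,0})$, which is also $\BSigma^0_\alpha$. Hence $f^{-1}(U_{i,1})$ is $\BPi^0_\alpha$ as well, so it is $\BDelta^0_\alpha$. Here I use that complements of $\BSigma^0_\alpha$ sets are $\BPi^0_\alpha$, which follows by a routine induction from the definition (clopen sets are closed under complement).

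\textbf{Converse.} Suppose each $f^{-1}(U_{i,b}) \in \BDelta^0_\alpha$. Any open $S \subseteq 2^\NN$ is a countable union of basic open sets $w2^\NN$, and each $w2^\NN$ is a finite intersection of sets $U_{i,b}$. So $f^{-1}(S)$ is a countable union of finite intersections of $\BDelta^0_\alpha$ sets. It then suffices to check that $\BSigma^0_\alpha$ contains $\BDelta^0_\alpha$ and is closed under finite intersections and countable unions.

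The main obstacle is this last closure step, and in particular the degenerate case $\alpha = 0$ together with finite intersections in general. For $\alpha \geq 1$, I would prove closure of $\BSigma^0_\alpha$ and $\BPi^0_\alpha$ under finite unions and intersections by simultaneous induction. To intersect two countable unions $\bigcup_n A_n$ and $\bigcup_m B_m$, rewrite the intersection as $\bigcup_{n,m} (A_n \cap B_m)$ and apply the inductive hypothesis to each lower-rank pair $A_n \cap B_m$. Countable unions are immediate from the definition.

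For $\alpha = 0$, countable unions of clopen sets need not be clopen, so the statement must be handled separately. There the circuits are finite, so $f^{-1}(U_{i,b})$ is clopen for each $i,b$ by Lemma~\ref{clopen}, which means $f$ is continuous. Under the given definitions, $\BSigma^0_0$-measurability then has to be interpreted as continuity (preimages of open sets being clopen would be too strong). I would either note this convention explicitly or restrict the lemma to $\alpha \geq 1$, and I expect the authors intend $\alpha \geq 1$ or the continuity reading.
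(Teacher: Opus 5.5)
Your proposal is correct and follows the paper's proof. Like the paper, you reduce to Corollary~\ref{rank-equivalence} via the clopen subbasis $\{y : y_i = b\}$, use complements for the forward direction (a step the paper leaves implicit), and use closure of $\BSigma^0_\alpha$ under countable unions and finite intersections for the converse. Your $\alpha = 0$ caveat is also justified: there the paper's appeal to closure under countable unions fails, and the converse is literally false (the identity map has coordinates computed by finite circuits but is not $\BSigma^0_0$-measurable), although the paper only ever applies the lemma with $\alpha \geq 1$.
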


\begin{proof}

Fix $\alpha < \omega_1$, and suppose first that $f \colon 2^\NN \rightarrow 2^\NN$ is $\BSigma^0_\alpha$-measurable. Then the sets $f^{-1}(\{x \in 2^\NN : x_i = b\})$ are $\BSigma^0_\alpha$ for every $i \in \NN$ and $b \in \{0,1\}$, since the sets $\{x \in 2^\NN : x_i = b\}$ are all open. Corollary~\ref{rank-equivalence} then proves the forward direction.

Conversely, suppose each map $x \mapsto f(x)_i$ for $i \in \NN$ is computed by both a $\BSigma_\alpha$-circuit and $\BPi_\alpha$-circuit. Then the sets $f^{-1}(\{x \in 2^\NN : x_i = b\})$ lie in $\BSigma^0_\alpha$ for every $i \in \NN$ and $b \in \{0,1\}$. Every open set can be generated by basic open sets of the form $\{x \in 2^\NN : x_i = b\}$ via countable unions and finite intersections, and as $\BSigma^0_\alpha$ is closed under countable unions and finite intersections, this proves the claim.
\end{proof}

Lemma~\ref{clopen} and Lemma~\ref{measurable-circuits} give us the following corollary:

\begin{corollary}
\label{continuous-circuits}

A function $f \colon 2^\NN \rightarrow 2^\NN$ is continuous if and only if the map $x \mapsto f(x)_i$ is computed by a finite circuit for every $i \in \NN$.

\end{corollary}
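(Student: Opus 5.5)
Combining Lemma~\ref{clopen} with Lemma~\ref{measurable-circuits} at $\alpha = 0$ settles this almost at once. The only point needing care is what ``$\BSigma^0_0$-measurable'' means compared with continuity. So the plan is to prove both directions directly, and to use the lemmas only for the translation between clopen sets and finite circuits.

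\emph{Forward direction.} Suppose $f$ is continuous and fix $i \in \NN$. The set $U_i = \{y \in 2^\NN : y_i = 1\}$ is clopen. Hence $f^{-1}(U_i)$ is both open and closed, since it is the preimage of an open set and also the complement of the preimage of the open set $\{y : y_i = 0\}$. Therefore $f^{-1}(U_i)$ is clopen, and Lemma~\ref{clopen} provides a finite circuit computing it. That circuit computes exactly the map $x \mapsto f(x)_i$.

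\emph{Reverse direction.} Suppose each coordinate map $x \mapsto f(x)_i$ is computed by a finite circuit. By Lemma~\ref{clopen}, each set $f^{-1}(\{y : y_i = b\})$ is clopen, in particular open. The sets $\{y : y_i = b\}$ form a subbasis of $2^\NN$: every basic open set $w2^\NN$ is a finite intersection of them. Every open set is therefore a countable union of finite intersections of subbasic sets. Preimages commute with unions and intersections, and open sets are closed under countable unions and finite intersections. Hence $f^{-1}(S)$ is open for every open $S$, so $f$ is continuous.

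No step here is a genuine obstacle. The only subtlety is that we must not literally invoke Lemma~\ref{measurable-circuits} with $\alpha = 0$. Under the paper's definitions, $\BSigma^0_0$-measurability would demand clopen preimages of \emph{all} open sets, which is stronger than continuity. Also, $\BSigma^0_0$ is not closed under countable unions, so the final step of that lemma's proof fails at $\alpha=0$. For this reason the reverse direction is run with ``open'' in place of ``$\BSigma^0_\alpha$'', mirroring the argument of Lemma~\ref{measurable-circuits}, while Lemma~\ref{clopen} supplies the clopen/finite-circuit correspondence coordinatewise.
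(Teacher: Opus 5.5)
Your proof is correct and is essentially the paper's argument, which derives the corollary from Lemma~\ref{clopen} and Lemma~\ref{measurable-circuits}; you inline the subbasis argument from the proof of the latter instead of citing it. Your caution about $\alpha = 0$ is right (the lemma fails there, e.g.\ for the identity map), but the intended instantiation is $\alpha = 1$, which is legitimate: continuity is exactly $\BSigma^0_1$-measurability, and a set computed by both a $\BSigma_1$- and a $\BPi_1$-circuit is clopen, so Lemma~\ref{measurable-circuits} can be cited directly.
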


\subsection{Tree functions}
\label{sec:alternating-trees}

The \emph{tree functions} are a special family of Borel functions from $2^\NN$ to $\{0,1\}$ which compute $\BSigma^0_\alpha$- or $\BPi^0_\alpha$-complete sets. We will use these functions in the formulation of the restriction lemma, and they are defined in terms of circuits. We begin by defining the underlying tree structure of these circuits.

\begin{definition}

The \emph{full trees} of rank $\alpha < \omega_1$, denoted $T_\alpha$, are the rooted trees defined recursively as follows:
\begin{itemize}
    \item The tree $T_0$ consists of a single vertex, which is the root node.
    \item For any successor ordinal $\alpha + 1$, the tree $T_{\alpha+1}$ consists of $\omega$-many disjoint copies of $T_\alpha$, all of whose root nodes are connected to a single additional vertex. This additional vertex is the root node of $T_{\alpha+1}$.
    \item If $\alpha$ is a limit ordinal, the tree $T_\alpha$ consists of $\omega$-many disjoint copies of $T_\beta$ for each $\beta < \alpha$, all of whose root nodes are connected to a single additional vertex. This additional vertex is the root node of $T_\alpha$.
\end{itemize}

\end{definition}

\begin{definition}

Given a tree $T$, we say that an alternating circuit $C$ is \emph{obtained} from $T$ if the underlying graph of $C$ is $T$ (with edges directed towards the root node), the input gates of $C$ are injectively labeled by positive literals $x_i$, and the output gate of $C$ is the root node of $T$. A \emph{full alternating tree} of rank $\alpha$ is an alternating circuit obtained from $T_\alpha$.

\end{definition}

\begin{remark}

Every full alternating tree of rank $\alpha$ is either a $\BSigma_\alpha$- or $\BPi_\alpha$-circuit, depending on whether the output gate is an $\OR$-gate or an $\AND$-gate.

\end{remark}

\begin{definition}

A function $g \colon 2^\NN \rightarrow \{0,1\}$ is called an \emph{$\alpha$-tree function} if it is computed by a full alternating tree of rank $\alpha$.

\end{definition}

\begin{remark}

One can construct a reduction from any given Borel set to the appropriate full alternating tree to see that every $\alpha$-tree function defines either a $\BSigma^0_\alpha$- or $\BPi^0_\alpha$-complete set, respectively.

\end{remark}

\begin{remark}

One can modify the definition of full trees at limit ordinals $\alpha$ to merely contain a sequence of disjoint full trees $T_{\beta_0}, T_{\beta_1},\dots$ satisfying $\beta_n < \alpha$ for each $n \in \NN$ and $\sup_{n \in \NN} \beta_n = \alpha$. The results of our paper remain true, roughly because these trees all embed into each other. Our particular definition is chosen to make the proof of Lemma~\ref{inductive-restriction} easier.

\end{remark}

\subsection{Restrictions}
\label{sec:restrictions}

We now define the key concept behind the restriction lemma:

\begin{definition}

A \emph{restriction} on $2^\NN$ is a function $\rho \colon \NN \rightarrow \{0, 1, *\}$. We call variables in the set $\left\{ x_i : \rho(i) = * \right\}$ \emph{preserved} and the remaining variables \emph{fixed}.

\end{definition}

\begin{definition}

We say that $\rho'$ \emph{refines} $\rho$ and write $\rho \preceq \rho'$ if $\rho'(n) = \rho(n)$ whenever $\rho(n) \in \{0,1\}$.

\end{definition}

\begin{definition}

Given a function $f \colon 2^\NN \rightarrow Y$ (where $Y$ is equal to either $2^\NN$ or $\{0,1\}$) and restriction $\rho$, we define the \emph{restricted function} $f^\rho \colon 2^{\rho^{-1}(\{*\})} \rightarrow Y$ which takes values for the preserved variables as input and evaluates $f$ on the resulting setting.

\end{definition}

\begin{definition}

Given a circuit $C$ and restriction $\rho$, we define the \emph{restricted circuit} $C^\rho$ by replacing the labels $x_i$ with $\rho(i)$ and $\overline{x}_i$ with $\overline{\rho(i)}$ whenever $x_i$ is fixed. 

\end{definition}

\begin{definition}

We call a circuit $C^\rho$ \emph{forced} by a restriction $\rho$ if it is equivalent to a constant $0$- or $1$-circuit. We call $C^\rho$ \emph{preserved} if it computes a $0$-tree function.

\end{definition}

\begin{remark}

Being preserved is a much stronger condition than simply being not forced. A preserved circuit simply propagates the value of an input variable, which will allow us to construct tree functions from such circuits via countable conjunctions and disjunctions.

\end{remark}

\begin{definition}

Suppose a circuit $C$ is forced by a restriction $\rho$ to $0$ (resp.\ $1$). If the output gate of $C$ is an $\AND$-gate, we call $C^\rho$ \emph{suppressed} (resp.\ \emph{overriding}). If the output gate of $C$ is an $\OR$-gate, we call $C^\rho$ \emph{overriding} (resp.\ \emph{suppressed}). If $C$ is a proper subcircuit of an alternating tree $T$ and has a literal $x_i$ as its output gate, we call $C^\rho$ \emph{suppressed} (resp.\ \emph{overriding}) if the parent of $x_i$ in $T$ is an $\OR$-gate, and we call $C^\rho$ \emph{overriding} (resp.\ \emph{suppressed}) if the parent of $x_i$ in $T$ is an $\AND$-gate.

\end{definition}

\begin{remark}

In an alternating circuit, suppressed children do not affect the value of their parents and overriding children force their parents. Notice also that any full alternating rank-$1$ tree can be suppressed by assigning a single literal and can only be made overriding if all literals are assigned. Any alternating circuit obtained from a tree can be preserved by assigning an appropriate input variable to $*$ and the rest to $0$ or $1$.

\end{remark}

\section{Proof of the restriction lemma}
\label{sec:restriction-lemma}

Our goal in this section is to prove Theorem~\ref{restriction} and Corollary~\ref{constant-restriction}, and this requires us to view $f$ through the lens of circuits. In particular, Lemma~\ref{measurable-circuits} tells us that a $\BSigma^0_{\alpha+1}$-measurable function $f \colon 2^\NN \rightarrow 2^\NN$ corresponds to a countable collection of sets computable both by $\BSigma_{\alpha+1}$- and $\BPi_{\alpha+1}$-circuits, and Corollary~\ref{continuous-circuits} tells us that a continuous function $f \colon 2^\NN \rightarrow 2^\NN$ corresponds to a countable collection of finite circuits. Instead of proving the result directly, we will prove a related claim more amenable to induction and rephrased in the language of circuits.

\begin{lemma}[Inductive restriction lemma]
\label{inductive-restriction}

For any ordinal $\alpha < \omega_1$, countable collection of full rank-$\alpha$ alternating trees $(T_{ij})_{i,j \in \NN}$ whose input variable sets are disjoint, and countable collection $(C_n)_{n \in \NN}$ of $\BSigma_\alpha$- or $\BPi_\alpha$-circuits, there exists a restriction $\rho$ such that each $C_n^\rho$ is equivalent to a finite circuit, each $T_{ij}^\rho$ is suppressed or preserved, and infinitely many of the $T_{ij}^\rho$'s are preserved for each fixed $i \in \NN$.

\end{lemma}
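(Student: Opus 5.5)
We argue by transfinite induction on $\alpha$, and inside each inductive step we use a fusion (diagonal) construction of length $\omega$. The case $\alpha = 0$ is immediate. Every $C_n$ is finite, and every $T_{ij}$ is a single literal $x_k$ on pairwise distinct variables, so the restriction $\rho \equiv *$ preserves every $T_{ij}$ and leaves each $C_n$ finite.

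For $\alpha \geq 1$ we use the recursive structure of both kinds of objects. Each $C_n$ is an $\OR$ (or $\AND$) of countably many subcircuits $C_{n,m}$, each of some rank $\beta_{n,m} < \alpha$. Each $T_{ij}$ is an alternating gate over $\omega$-many copies of $T_\beta$. When $\alpha = \gamma + 1$ these copies all have $\beta = \gamma$. When $\alpha$ is a limit they come in $\omega$-many copies for each $\beta < \alpha$, which is why this definition of $T_\alpha$ was chosen.

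The plan is to re-index and apply the inductive hypothesis one level down. Enumerate the pairs $(n,m)$, and for each child-circuit $C_{n,m}$ pick a rank $\beta \ge \beta_{n,m}$ with $\beta<\alpha$. We form a new doubly indexed family of full rank-$\beta$ trees. Its rows are indexed by triples $(i,j,k)$, and the row for $(i,j,k)$ consists of the copies of $T_\beta$ that sit below $T_{ij}$; the index $k$ splits these copies into infinitely many disjoint infinite blocks. Copies of higher rank $\beta' > \beta$ can be cut down to rank $\beta$ by fixing variables, because $T_\beta$ embeds in $T_{\beta'}$ as a suppressible/preservable substructure. A full alternating tree of rank $\beta'$ is cut down to a copy of $T_\beta$ by suppressing all its other parts and preserving a single path, as in the remark following the definition of suppressed and overriding.

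Applying the inductive hypothesis gives a restriction $\rho_0$ with three properties. Every $C_{n,m}^{\rho_0}$ is finite. Every rank-$\beta$ subtree is suppressed or preserved. In each row infinitely many subtrees are preserved. This already makes each $T_{ij}^{\rho_0}$ an $\OR$/$\AND$ of literals, that is, a rank-$1$ object, with infinitely many preserved literals. It also makes each $C_n^{\rho_0}$ a countable disjunction (or conjunction) of finite circuits, that is, open or closed. The limit case differs only in that the target ranks $\beta$ vary with $(n,m)$. To handle this we dovetail: at stage $s$ we treat only the $C_{n,m}$ with index at most $s$, and we apply the inductive hypothesis to the copies of $T_\beta$ with $\beta$ large enough for those circuits.

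The remaining step, which I expect to be the main obstacle, is collapsing the open or closed sets $C_n^{\rho_0}$ to finite circuits. We must do this while each $T_{ij}$ ends up suppressed or preserved, with infinitely many preserved in every row $i$. I would do it by a fusion over $n$, maintaining at each stage a finite set of ``committed'' preserved literals in each row, and I would first try the following dichotomy, say for $C_n = \bigvee_m C_{n,m}$.

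\emph{Case 1.} Some finite $C_{n,m}^{\rho}$ can be made true by fixing finitely many preserved variables, without touching the committed literals and while fixing each affected $T_{ij}$ only in a suppressing way. In that case we fix those variables, so $C_n$ becomes forced and hence finite.

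\emph{Case 2.} Otherwise, every child that could become true must use a committed literal or override some $T_{ij}$. We then suppress every uncommitted literal in the affected trees except those kept for later stages, and argue that $C_n$ now depends only on finitely many committed variables, hence is finite.

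The delicate bookkeeping is to guarantee that each row retains infinitely many preserved trees in the limit. We handle it by committing one new preserved tree per row at every stage and protecting it thereafter. Finally, we turn each surviving rank-$1$ $T_{ij}$ into a preserved or suppressed tree by fixing all but one of its literals appropriately.
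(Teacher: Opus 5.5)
Your successor step is the paper's: apply the hypothesis one level down to the children of the $T_{ij}$ (grouped by parent) and of the $C_n$, then finish with a rank-$1$ argument. The genuine gap is the limit case, which you describe as differing ``only in that the target ranks vary''. That is where the actual work of the proof lies, and your dovetailing sketch does not handle it. The hypothesis applied at stage $s$ at some rank $\beta_s$ controls only the trees and circuits you feed it. The restriction it returns may fix, in an uncontrolled way, any variable the stage-$s$ circuits read. That includes variables in the higher-rank children of each $T_{ij}$ that you reserve for later stages: these need no longer be full trees, and may even become overriding and force $T_{ij}$. It also includes variables you preserved at earlier stages, which can likewise become forced. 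Finally, nothing guarantees that what remains of the trees still outranks the later circuit children, whose ranks may be cofinal in $\alpha$.

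The paper's limit step is built around exactly these problems. At stage $s$, with $\beta_s=\rank(D_s)$, it suppresses every subtree $S_{ijk}$ of rank at most $\beta_s$ that contains no permanently preserved variable. It cuts each other such subtree, of rank $\beta_s+\gamma$, down to $T_{\beta_s}+T_\gamma$ (Lemma~\ref{embedding}). It then feeds all the bottom copies of $T_{\beta_s}$, grouped by common parent, into the same application of the hypothesis, so each surviving subtree becomes a full tree of rank $\gamma$. It permanently preserves only one variable per stage, along a queue over pairs $(i,j)$. It replaces each relevant circuit $D$ by $\bigvee_{\sigma}\bigl(D^{\sigma}\AND\bigwedge_{r\le s}(x_{n_r}=\sigma_r)\bigr)$, so the hypothesis is applied to circuits $D^\sigma$ that do not read the protected variables. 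And it applies the hypothesis to \emph{all} subcircuits of rank at most $\beta_s$ of \emph{all} children $D_m$. Each $D_m$ of rank $\beta_s+\delta_m$ therefore drops to rank at most $\delta_m$ (Lemma~\ref{killing-leaves}), and right continuity and right monotonicity then maintain the invariant $\rank(D_m)<\limsup_k\rank(S_{ijk})$. In the limit each $T_{ij}$ has rank $1$ and each $C_n$ is $\BSigma_1$ or $\BPi_1$, and the $\alpha=1$ case finishes the proof. None of these ingredients appears in your proposal.

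Your rank-$1$ collapse is also not yet an argument. Committing a new tree in every row at every stage puts infinitely many protected variables in play at once. The conclusion of Case 2, that $C_n$ then depends on only finitely many committed variables, has no support. Nothing bounds how many committed literals, or how many of the literals kept for later stages, the infinitely many children of $C_n$ read. For instance, if what remains of $C_n$ is $\bigvee_i z_i$ with $z_i$ the literal committed in row $i$, you are in Case 2 and cannot make $C_n$ finite without unprotecting committed literals. The working version is the paper's $\alpha=1$ case. Commit a single literal per stage, along a queue in which every row recurs infinitely often. For each of the $2^{n+1}$ assignments to the committed literals under which $C_n$ is not yet forced, force one finite child to its overriding value using only fresh variables. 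Then suppress each tree those fresh variables touch; one literal suffices for a rank-$1$ tree. This makes $C_n$ a function of $n+1$ bits, and no dichotomy is needed.
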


Note that each $C_n$ can use any of the variables as inputs. The collection $(T_{ij})_{i,j \in \NN}$ should be interpreted as infinitely many groups of infinitely many alternating trees. The inductive restriction lemma tells us that no matter how these groups are formed, we can always ensure that each group $(T_{ij})_{j \in \NN}$ for fixed $i \in \NN$ contains infinitely many preserved trees.

\subsection{Tree decomposition}

To prove the restriction lemma, we need some information about the combinatorial structure of the full trees, and the lemma we prove in this section is the necessary result. We first recall three important facts about ordinal arithmetic (see \cite{Jech2003SetTheory} for a detailed treatment):

\begin{proposition}[Left subtraction]

Given ordinals $\alpha$ and $\beta$ with $\beta \leq \alpha$, there exists a unique ordinal $\gamma$ such that $\beta + \gamma = \alpha$.

\end{proposition}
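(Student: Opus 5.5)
We prove existence and uniqueness separately, using the set-theoretic description of ordinal addition: $\beta + \gamma$ is the order type of the well-order obtained by placing a copy of $\gamma$ after a copy of $\beta$. Equivalently, it is given by the recursion $\beta + 0 = \beta$, $\beta + (\gamma+1) = (\beta+\gamma)+1$, and $\beta + \lambda = \sup_{\gamma<\lambda}(\beta+\gamma)$ for limit $\lambda$. We freely use either description, since their equivalence is standard (see \cite{Jech2003SetTheory}).

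\emph{Existence.} Regard ordinals as von Neumann ordinals and consider the set $D = \alpha \setminus \beta = \{\xi : \beta \le \xi < \alpha\}$. It is well-ordered by $\in$, so let $\gamma$ be its order type, with isomorphism $\pi \colon \gamma \to D$. Define $\Phi \colon \beta \sqcup \gamma \to \alpha$ by sending $\xi \in \beta$ to $\xi$ and $\eta \in \gamma$ to $\pi(\eta)$.
\begin{itemize}
\item $\Phi$ is a bijection, because $\alpha = \beta \cup D$ is a disjoint union.
\item $\Phi$ is order preserving. Within each piece this is clear, and every element of $\beta$ lies below every element of $D$.
\end{itemize}
Hence $\Phi$ is an isomorphism from the ordered sum $\beta \sqcup \gamma$ onto $\alpha$, so $\beta + \gamma = \alpha$. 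Alternatively, one can argue by transfinite induction on $\alpha \ge \beta$: take $\gamma = 0$ when $\alpha = \beta$, go from $\gamma$ to $\gamma + 1$ at successors, and take the supremum of the witnesses at limits.

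\emph{Uniqueness.} It suffices to show that $\gamma \mapsto \beta + \gamma$ is strictly increasing: if $\gamma < \gamma'$ then $\beta + \gamma < \beta + \gamma'$. Then $\gamma \ne \gamma'$ forces $\beta+\gamma \ne \beta+\gamma'$, by trichotomy of ordinals. We prove monotonicity by induction on $\gamma'$, for a fixed $\gamma$.
\begin{itemize}
\item If $\gamma' = \delta + 1$, then $\gamma \le \delta$. So $\beta + \gamma \le \beta + \delta$, either by equality or by the inductive hypothesis. Also $\beta + \delta < (\beta+\delta)+1 = \beta+\gamma'$.
\item If $\gamma'$ is a limit, then $\gamma + 1 < \gamma'$. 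Hence $\beta + \gamma < \beta + \gamma + 1 \le \sup_{\delta < \gamma'} (\beta + \delta) = \beta + \gamma'$.
\end{itemize}
Alternatively, in the order-type picture, $\beta + \gamma$ is isomorphic to a proper initial segment of $\beta + \gamma'$. A well-order is never isomorphic to a proper initial segment of itself, which gives the strict inequality.

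No step here is a real obstacle; this is a standard fact. The only point needing care is keeping one definition of ordinal addition fixed throughout. The limit case of the monotonicity induction is the one place where the continuity clause of that definition is actually used.
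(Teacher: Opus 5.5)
The paper gives no proof of this proposition; it only recalls it as a standard fact of ordinal arithmetic with a pointer to Jech. Your argument is correct and is the standard textbook proof that citation alludes to: existence by taking $\gamma$ to be the order type of $\alpha \setminus \beta$, and uniqueness from strict monotonicity of $\gamma \mapsto \beta + \gamma$. One small caveat: your alternative inductive existence sketch silently uses that monotonicity at limit stages, to see that the supremum of the witnesses is a limit ordinal $\gamma$ with $\beta + \gamma = \alpha$. Your main order-type argument does not depend on it.
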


\begin{proposition}[Right monotonicity]

Given ordinals $\alpha$, $\beta$ and $\gamma$, we have $\alpha + \beta < \alpha + \gamma$ if and only if $\beta < \gamma$.

\end{proposition}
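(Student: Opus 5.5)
I would prove this directly from the recursive definition of ordinal addition: $\alpha + 0 = \alpha$, $\alpha + (\delta+1) = (\alpha+\delta)+1$, and $\alpha + \lambda = \sup_{\delta<\lambda}(\alpha+\delta)$ for limit $\lambda$. The core of the argument is that the map $\gamma \mapsto \alpha + \gamma$ is strictly increasing. The biconditional then follows from trichotomy of ordinals.

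\emph{Forward direction, contrapositive form.} I would fix $\alpha$ and show by transfinite induction on $\gamma$ that for every $\beta < \gamma$ we have $\alpha+\beta < \alpha+\gamma$.
\begin{itemize}
\item Case $\gamma = 0$: the claim is vacuous.
\item Case $\gamma = \delta+1$: any $\beta<\gamma$ satisfies $\beta \le \delta$. By the inductive hypothesis, or trivially when $\beta=\delta$, we get $\alpha+\beta \le \alpha+\delta$. Since $\alpha+\delta < (\alpha+\delta)+1 = \alpha+\gamma$, the claim holds.
\item Case $\gamma$ a limit: for $\beta<\gamma$ we also have $\beta+1<\gamma$. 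Hence
\[
\alpha+\beta < (\alpha+\beta)+1 = \alpha+(\beta+1) \le \sup_{\delta<\gamma}(\alpha+\delta) = \alpha+\gamma.
\]
\end{itemize}
This establishes that $\beta<\gamma$ implies $\alpha+\beta<\alpha+\gamma$.

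\emph{Converse.} Suppose $\alpha+\beta<\alpha+\gamma$ but $\beta \not< \gamma$. By trichotomy, either $\beta=\gamma$ or $\gamma<\beta$. If $\beta=\gamma$, then $\alpha+\beta=\alpha+\gamma$. If $\gamma<\beta$, then the first part gives $\alpha+\gamma<\alpha+\beta$. Either case contradicts the assumption, so $\beta<\gamma$.

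The only step requiring care is the limit case. There one must use that a limit ordinal is closed under successor, so that $\beta+1<\gamma$, in order to get a strict inequality out of a supremum; a supremum alone would give only $\le$.

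If the reader prefers the order-type definition of addition, there is an equally short route. In that definition, $\alpha+\gamma$ is the order type of $\alpha$ followed by $\gamma$, and $\alpha+\beta$ is order-isomorphic to a proper initial segment of it whenever $\beta<\gamma$. An ordinal is never isomorphic to a proper initial segment of itself, which gives the strict inequality. The converse then again follows by trichotomy.
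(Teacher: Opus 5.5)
Your proof is correct. The paper gives no proof of its own and simply cites the standard treatment in Jech, where strict monotonicity of $\gamma \mapsto \alpha+\gamma$ is established by the same transfinite induction on the recursive definition that you give, with trichotomy supplying the other implication. One cosmetic point: the part you label ``forward direction, contrapositive form'' is actually a direct proof of the ``if'' direction ($\beta<\gamma \Rightarrow \alpha+\beta<\alpha+\gamma$), and your ``converse'' proves the ``only if'' direction, so both directions are covered despite the mislabeling.
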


\begin{proposition}[Right continuity]

Given ordinals $\alpha, \beta_0,\beta_1,\dots$, we have
\begin{equation*}
    \alpha + \limsup_{n \to \infty} \beta_n = \limsup_{n \to \infty}(\alpha + \beta_n) \text{.}
\end{equation*}

\end{proposition}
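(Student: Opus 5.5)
The plan is to reduce the statement to the familiar continuity of ordinal addition in its right argument, applied to tail suprema. I read $\limsup_{n\to\infty}\beta_n$ as $\inf_{m}\sup_{n\ge m}\beta_n$. Write $s_m=\sup_{n\ge m}\beta_n$. The sequence $(s_m)$ is non-increasing in $m$, and a non-increasing sequence of ordinals is eventually constant, since there is no infinite strictly descending chain. Hence $\limsup_{n}\beta_n$ equals the eventual value $L$ of $(s_m)$. Likewise $\limsup_n(\alpha+\beta_n)$ is the eventual value of $t_m=\sup_{n\ge m}(\alpha+\beta_n)$.

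\textbf{Step 1: supremum continuity.} The key step is to show that $\alpha+\sup S=\sup_{\gamma\in S}(\alpha+\gamma)$ for every nonempty countable set $S$ of ordinals.
\begin{itemize}
\item The inequality $\ge$ follows from right monotonicity: $\gamma\le\sup S$ gives $\alpha+\gamma\le\alpha+\sup S$.
\item For $\le$, if $\sup S$ is attained by some element of $S$, there is nothing more to prove.
\item Otherwise $\lambda=\sup S$ is a limit ordinal. By the recursive definition of ordinal addition, $\alpha+\lambda=\sup_{\delta<\lambda}(\alpha+\delta)$.
\item Each $\delta<\lambda$ lies below some $\gamma\in S$, so right monotonicity gives $\alpha+\delta<\alpha+\gamma\le\sup_{\gamma\in S}(\alpha+\gamma)$.
\end{itemize}
This is the only place the definition of addition at limits enters, and it is the main point to get right. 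In particular, the unattained case must be handled by the limit clause rather than by monotonicity alone.

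\textbf{Step 2: passing to tails.} Applying Step 1 to each tail $S=\{\beta_n:n\ge m\}$ gives $t_m=\alpha+s_m$ for every $m$. Choose $m_0$ such that $s_m=L$ for all $m\ge m_0$. Then $t_m=\alpha+L$ for all $m\ge m_0$. Since $(t_m)$ is also non-increasing, its infimum is this eventual value. Therefore
\begin{equation*}
\limsup_n(\alpha+\beta_n)=\inf_m t_m=\alpha+L=\alpha+\limsup_n\beta_n,
\end{equation*}
as claimed.

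I do not expect any step beyond Step 1 to pose difficulty. The rest is bookkeeping with eventually constant non-increasing sequences, and the strictness in right monotonicity is not even needed there.
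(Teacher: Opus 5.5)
Your proof is correct: the limit clause of ordinal addition gives $\alpha+\sup S=\sup_{\gamma\in S}(\alpha+\gamma)$, and applying this to the tail suprema $s_m=\sup_{n\ge m}\beta_n$, which are non-increasing and hence eventually constant, yields the identity. The paper does not prove this proposition but quotes it as a standard fact of ordinal arithmetic (citing Jech), and your argument is the standard derivation of it from the continuity of $\gamma\mapsto\alpha+\gamma$.
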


We also provide some definitions and notation to state our combinatorial lemma. For the following three definitions, $S$ and $T$ are rooted trees. We consider trees up to rooted isomorphism.

\begin{definition}

We write $S \leq T$ if $S$ is a subtree of $T$ with the same root node as $T$.

\end{definition}

\begin{definition}

A \emph{child} of $T$ is a subtree of $T$ rooted at a child of the root node of $T$.

\end{definition}

\begin{definition}

We define the \emph{grafted} tree $S + T$ by replacing the leaves of $T$ by disjoint copies of $S$ (each leaf becomes the root node of a copy of $S$). The root of $S + T$ is the root of $T$.

\end{definition}

A simple transfinite induction verifies that $\rank(S + T) = \rank(S) + \rank(T)$. Our key lemma is the following property of the full trees:

\begin{lemma}
\label{embedding}

We have for all $\alpha,\beta < \omega_1$ that $T_\alpha + T_\beta \leq T_{\alpha+\beta}$.

\end{lemma}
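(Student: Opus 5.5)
Fix $\alpha$ and prove $T_\alpha + T_\beta \leq T_{\alpha+\beta}$ by transfinite induction on $\beta < \omega_1$. Throughout, use the following monotonicity fact, itself proved by an easy induction on $\gamma'$: if $\gamma \leq \gamma'$ then $T_\gamma \leq T_{\gamma'}$. For the inductive step with $\gamma'$ a successor $\delta+1$ and $\gamma \leq \delta$, map each of the $\omega$ copies of $T_\delta$ into a copy of $T_{\gamma''}$ (with $\gamma = \gamma''+1$, or $\gamma = 0$ trivially). For the limit case $T_\gamma$ has $\omega$ copies of each $T_\eta$ with $\eta<\gamma\le\gamma'$, and $T_{\gamma'}$ has $\omega$ copies of each such $T_\eta$ as children. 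If $\gamma$ is a limit and $\gamma'$ a successor $\delta+1$, map the countably many children of $T_\gamma$ injectively onto the $\omega$ copies of $T_\delta$, which works since each $\eta<\gamma\le\delta$. Subtrees with the same root compose, so $\leq$ is transitive.

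\textbf{Base case $\beta = 0$.} Here $T_\alpha + T_0 = T_\alpha$, since the single leaf of $T_0$ is replaced by $T_\alpha$.

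\textbf{Successor case $\beta = \delta+1$.} The children of $T_{\delta+1}$ are $\omega$ copies of $T_\delta$. Grafting acts childwise, so $T_\alpha + T_{\delta+1}$ is a root with $\omega$ children, each isomorphic to $T_\alpha + T_\delta$; this uses that the leaves of $T_\beta$ for $\beta\ge1$ lie in its children. On the other side, $T_{\alpha+\delta+1}$ is a root with $\omega$ copies of $T_{\alpha+\delta}$. By induction $T_\alpha + T_\delta \leq T_{\alpha+\delta}$, and embedding the copies childwise gives the result.

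\textbf{Limit case.} The tree $T_\alpha + T_\beta$ is a root whose children are, for each $\eta<\beta$, $\omega$ copies of $T_\alpha + T_\eta$. By induction each of these embeds into $T_{\alpha+\eta}$. By right monotonicity $\alpha+\eta < \alpha+\beta$, and by right continuity $\alpha+\beta$ is a limit ordinal (it equals $\sup_{\eta<\beta}(\alpha+\eta)$). So $T_{\alpha+\beta}$ is a root with $\omega$ copies of $T_\zeta$ for every $\zeta<\alpha+\beta$, and in particular $\omega$ copies of $T_{\alpha+\eta}$ for each $\eta<\beta$. The map $\eta \mapsto \alpha+\eta$ is injective, so we can send the $\omega$ copies indexed by $\eta$ to the $\omega$ copies of $T_{\alpha+\eta}$ and then apply the inductive embeddings.

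\textbf{Main obstacle.} The delicate point is bookkeeping rather than depth: one must check that grafting commutes with taking children (leaves of $T_\beta$ for $\beta\ge 1$ are exactly the leaves of its children), and that embeddings into distinct children are disjoint. Disjointness holds because each child of the source is sent to a different child of the target. I expect the limit case, with its reliance on the precise definition of $T_\gamma$ at limits containing $\omega$ copies of every smaller full tree, to be where the paper's chosen definition is essential.
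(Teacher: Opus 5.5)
Your proof is correct and follows the paper's argument: transfinite induction on $\beta$, with the successor and limit cases handled childwise exactly as in the paper; your explicit remarks that $\alpha+\beta$ is a limit when $\beta$ is, and that $\eta \mapsto \alpha+\eta$ is injective, fill in details the paper leaves implicit. The monotonicity fact $T_\gamma \leq T_{\gamma'}$ you set up at the start is never actually used, and its successor sub-case has the embedding direction reversed (copies of $T_{\gamma''}$ should map into copies of $T_\delta$), so you can simply drop it.
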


\begin{proof}

We proceed by transfinite induction on $\beta$. If $\beta = 0$, the result is immediate as $T_\alpha + T_0 = T_\alpha$. For the successor step, suppose $\beta = \delta + 1$ and that the claim holds for $\delta$. Then $\alpha + \beta$ is a successor ordinal, so $T_{\alpha + \beta}$ has $\omega$-many $T_{\alpha + \delta}$ trees as children. We also have that $T_{\beta}$ has $\omega$-many $T_{\delta}$ trees as children, so $T_\alpha + T_\beta$ has $\omega$-many $T_\alpha + T_\delta$ trees as children. As $T_\alpha + T_\delta \leq T_{\alpha + \delta}$, this means that $T_\alpha + T_\beta \leq T_{\alpha + \beta}$, proving the successor case.

Now suppose $\beta$ is a limit ordinal and the claim holds for every $\delta < \beta$. Then $T_{\beta}$ has $\omega$-many copies of $T_\delta$ for each $\delta < \beta$ as children. Thus $T_\alpha + T_\beta$ contains as children $\omega$-many copies of the trees $T_\alpha + T_\delta \leq T_{\alpha + \delta}$ for each $\delta < \beta$. Since each $\alpha + \delta < \alpha + \beta$ by right monotonicity, the tree $T_{\alpha + \beta}$ contains $\omega$-many copies of each $T_{\alpha + \delta}$ as children. Thus $T_\alpha + T_\beta \leq T_{\alpha + \beta}$, proving the limit case.
\end{proof}

The above lemmas are important because of the following fact about restrictions, which is immediate from the definitions but crucial in the proof of the restriction lemma:

\begin{lemma}
\label{subtree-restriction}

If $C$ is an alternating circuit obtained from a tree $T$ and $S \leq T$, then there exists a restriction $\rho$ such that $C^\rho$ is equivalent to an alternating circuit obtained from $S$.

\end{lemma}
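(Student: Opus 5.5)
We prove Lemma~\ref{subtree-restriction} directly from the definitions, by transfinite induction on the rank of $T$ or, more concretely, by describing one global restriction. Since $S \leq T$, $S$ is a subtree of $T$ sharing its root, and so every vertex of $T$ not in $S$ lies in a maximal subtree of $T$ hanging off $S$. To be precise, let $v$ be a vertex of $T \setminus S$ whose parent $u$ lies in $S$. Then the whole subtree of $T$ rooted at $v$ is disjoint from $S$, because $S$ is closed under taking parents (it is a rooted subtree containing the root). Call these subtrees the \emph{pruned branches}. The plan is to choose $\rho$ so that the subcircuit of $C$ at each pruned branch becomes \emph{suppressed relative to its parent}: it is forced to $1$ if the parent $u$ is an $\AND$-gate and to $0$ if $u$ is an $\OR$-gate. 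Every variable at a leaf of $S$ is set to $*$.

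The construction of the forcing proceeds as follows. For a pruned branch rooted at $v$ with parent $u$, we need the subcircuit $C_v$ forced to the neutral value $b_u$ of $u$. Here $b_u = 1$ if $u$ is an $\AND$-gate and $b_u = 0$ if $u$ is an $\OR$-gate. Since $C$ is alternating, $v$ is either a literal $x_i$ or a gate of the opposite type to $u$. Force $C_v$ to $b_u$ by recursion down the tree.
\begin{itemize}
\item If $v$ is a literal $x_i$, set $\rho(i) = b_u$.
\item If $v$ is a gate whose neutral value is $1 - b_u$, then $b_u$ is the absorbing value of $v$. It therefore suffices to force \emph{all} children of $v$ to the value $b_u$, which is the neutral value of $v$'s children's parent type. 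Equivalently, we force each child $w$ of $v$ to $1 - b_v$, recursing.
\end{itemize}
Rather than choosing one child, the simplest uniform choice is to set every leaf variable in the branch so that every gate evaluates to its required value. Define the required value top-down: the value at $v$ is $b_u$, and the value at every child of a gate $w$ equals the value required at $w$. This works because for an $\AND$-gate requiring $1$ or an $\OR$-gate requiring $0$, having all children take the same value suffices. For an $\AND$-gate requiring $0$, having all children equal $0$ also suffices, and similarly for $\OR$. So the whole branch is constant: set every variable in the pruned branch to $b_u$. The input labels are injective positive literals, so these assignments never conflict with each other or with the $*$'s on leaves of $S$. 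Remaining variables not appearing in $T$ are set arbitrarily, say to $0$.

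It remains to verify that $C^\rho$ is equivalent to the circuit $C_S$ obtained from $S$. This is the circuit with the same gate labels as $C$ on $S$, where each leaf of $S$ is an input gate. If a leaf $\ell$ of $S$ is a leaf of $T$, it carries its original literal $x_i$, which is preserved. If $\ell$ was an internal gate of $T$, note that every child of $\ell$ is the root of a pruned branch, so $\ell$ itself has a required value that is determined. We must handle this case: I would avoid it by instead assigning $*$ to one chosen leaf variable $x_i$ below $\ell$, keeping the path from $\ell$ to that leaf as a preserved chain, and suppressing all side branches along it as above. A chain of gates each with a single unsuppressed child computes $x_i$, so the subcircuit at $\ell$ becomes a preserved circuit, that is, a $0$-tree function. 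We relabel it as the input gate $x_i$; injectivity of labels is retained since distinct leaves use disjoint branches. Finally, by induction up from the leaves of $S$, each gate $u$ of $S$ has its non-$S$ children evaluating to the neutral value of $u$, so $u$ computes the same function of its $S$-children as it does in $C_S$. The only delicate step is this bookkeeping at internal leaves of $S$; the rest is immediate from the definitions of suppressed children.
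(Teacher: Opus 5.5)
Your proof is correct and matches the argument the paper has in mind when it calls this lemma immediate from the definitions: you suppress each branch hanging off $S$ by setting all of its variables to the neutral value of its parent, and below each leaf of $S$ that is internal in $T$ you preserve a single path down to a leaf variable while suppressing that path's side branches. The clause in your bulleted recursion saying $b_u$ is the neutral value of $v$'s children's parent is a slip, since the neutral value of $v$ is $1-b_u$; it is harmless, because the uniform assignment of $b_u$ to the whole branch is what the argument actually uses.
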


Finally, we provide one more straightforward lemma which will be helpful in the proof.

\begin{lemma}
\label{killing-leaves}

If an alternating circuit $C$ has rank at most $\beta + \delta$ and $D^\rho$ is equivalent to a finite circuit for every subcircuit $D$ of $C$ with rank at most $\beta$, then $C^\rho$ is equivalent to a circuit with rank at most $\delta$.

\end{lemma}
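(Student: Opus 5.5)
The plan is to prove a slightly stronger, type-preserving statement by well-founded induction on the subcircuits of $C$, and then read off the lemma. Fix $\rho$ as in the hypothesis. I would prove the following claim for every subcircuit $D$ of $C$. If $\rank(D) \leq \beta$, then $D^\rho$ is equivalent to a finite circuit; this is just the hypothesis. If $\rank(D) > \beta$, write $\rank(D) = \beta + \varepsilon$ with $\varepsilon \geq 1$, using left subtraction. Then $D^\rho$ is equivalent to an alternating $\BSigma_\varepsilon$-circuit if the output gate of $D$ is an $\OR$-gate, and to an alternating $\BPi_\varepsilon$-circuit if it is an $\AND$-gate.

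Once the claim is established, the lemma follows. If $\rank(C) \leq \beta$, then $C^\rho$ is finite and so has rank $0 \leq \delta$. Otherwise $\rank(C) = \beta + \varepsilon \leq \beta + \delta$, and right monotonicity gives $\varepsilon \leq \delta$. Hence $C^\rho$ is equivalent to a circuit of rank at most $\delta$.

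For the inductive step, take $D$ with $\rank(D) = \beta + \varepsilon > \beta$. Say the output gate of $D$ is an $\OR$-gate; the $\AND$ case is dual. Since $D$ is alternating, each child $D_n$ has an $\AND$-gate or a literal as its output gate, and $\rank(D_n) < \beta + \varepsilon$. I replace each child as follows.
\begin{itemize}
\item If $\rank(D_n) \leq \beta$ (this includes every literal child), then $D_n^\rho$ is equivalent to a finite circuit. By Lemma~\ref{clopen} it computes a clopen set. A clopen set is a finite union of basic open sets, each a finite conjunction of literals, and distributing turns this into a finite conjunction of finite disjunctions of literals. This CNF is a finite alternating circuit with an $\AND$ output gate, hence a $\BPi_0$-circuit whose top gate has the correct type.
\item If $\rank(D_n) = \beta + \varepsilon_n > \beta$, then right monotonicity gives $\varepsilon_n < \varepsilon$. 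The induction hypothesis then makes $D_n^\rho$ equivalent to a $\BPi_{\varepsilon_n}$-circuit with an $\AND$ output gate.
\end{itemize}
Now form the disjunction of these replacements under a single $\OR$-gate. Every child is a $\BPi_{\varepsilon'}$-circuit for some $\varepsilon' < \varepsilon$, with $\AND$ or literal top gates, so the new circuit is alternating. By definition it is then a $\BSigma_\varepsilon$-circuit equivalent to $D^\rho$. This works uniformly whether $\varepsilon$ is a successor or a limit, since the definition of $\BSigma_\varepsilon$ only asks that each child have some smaller rank.

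The main obstacle is alternation. If the finite or lower-rank replacements were allowed to have the same top-gate type as their parent, the glued circuit would fail to be alternating, and merging gates as in Remark~\ref{circuit-simplification} could disturb the rank bookkeeping. The type-preserving form of the claim handles this for the high-rank children. For the finite children it is handled by choosing CNF (respectively DNF) normal forms, so the top gate always has the required type. Constant children also fit: a constant can be taken as an input gate labelled $0$ or $1$, or as a trivial CNF. A final minor point is that the gluing may use countably many disjoint copies, which is harmless since circuits are only required to be countable and well-founded.
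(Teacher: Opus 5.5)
Your argument is correct and complete. The paper gives no proof of this lemma (it is introduced only as ``one more straightforward lemma''), and your well-founded induction on subcircuits is the natural argument it leaves to the reader: you strengthen the claim to preserve the top-gate type, normalize finite children to CNF under $\OR$-gates (DNF under $\AND$-gates), and use right monotonicity to get $\varepsilon_n<\varepsilon$ for the children and $\varepsilon\le\delta$ at the top. This handles the one point that needs care, keeping the rebuilt circuit alternating.
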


\subsection{Using the inductive lemma}

We are now equipped with all the tools we need to prove Lemma~\ref{inductive-restriction}. First, let us temporarily assume this lemma to be true and see how it implies Theorem~\ref{restriction} and Corollary~\ref{constant-restriction}

\begin{proof}[Proof of Theorem~\ref{restriction}]

If $\beta = 0$, the claim is immediate. Otherwise, Lemma~\ref{measurable-circuits} tells us that the sets $f^{-1}(\{x \in 2^\NN : x_i = 1\})$ are computed both by $\BSigma_{\alpha+1}$-circuits $C_i$ and $\BPi_{\alpha+1}$-circuits $D_i$ for each $i \in \NN$. Lemma~\ref{embedding} tells us that $T_\alpha + T_\beta \leq T_{\alpha + \beta}$, so Lemma~\ref{subtree-restriction} gives us a restriction $\tilde{\rho}$ such that $g^{\tilde{\rho}}$ is computed by an alternating circuit obtained from the tree $T_\alpha + T_\beta$. Apply Lemma~\ref{inductive-restriction} to every child of the $C_i$'s and $D_i$'s and to the $T_\alpha$ subtrees of $T_\alpha + T_\beta$, where each group consists of all $T_\alpha$ subtrees corresponding to leaves in $T_\beta$ with the same parent. Letting $\rho$ denote this further restriction, we see that each $C_i^{\rho}$ is a $\BSigma_1$-circuit, each $D_i^{\rho}$ is a $\BPi_1$-circuit, and $g^{\rho}$ is a $\beta$-tree function. Lemma~\ref{measurable-circuits} tells us that $f^\rho$ is $\BSigma^0_1$-measurable and hence continuous.
\end{proof}

\begin{proof}[Proof of Corollary~\ref{constant-restriction}]

Define $\tilde{f} \colon 2^\NN \rightarrow 2^\NN$ so that $\tilde{f}(x)_0 = f(x)$ and $\tilde{f}(x)_i = 0$ for all $x \in 2^\NN$ and $i \in \NN \setminus \{0\}$. Theorem~\ref{restriction} then gives us a restriction $\tilde{\rho}$ for which $g^{\tilde{\rho}}$ is computed by a full alternating tree of rank $\beta \geq 1$ and $(\tilde{f})^{\tilde{\rho}}$ is continuous. Corollary~\ref{continuous-circuits} tells us that $f^{\tilde{\rho}}$ depends on only finitely many variables $x_{i_0}, \dots, x_{i_k}$. By fixing each of these variables such that their corresponding input gates are suppressed, we obtain a further restriction $\rho$ for which $f^\rho$ is constant and $g^\rho$ is still a $\beta$-tree function.
\end{proof}

\subsection{Proof of the inductive lemma}

It remains to prove Lemma~\ref{inductive-restriction}, and we proceed by transfinite induction on $\alpha$. We can take $\rho$ to preserve all variables in the $\alpha = 0$ case. We first give a proof of the lemma when $\alpha = 1$, since it will be helpful for both the inductive and limit stages of the induction. The proof when $\alpha = 1$ is similar to Sipser's argument in \cite{sipser1983}.

\begin{proof}[Proof of the $\alpha = 1$ case]

We are given a collection $(T_{ij})_{i,j \in \NN}$ of full rank-$1$ alternating trees with disjoint input variable sets, each of which contains a single $\AND$- or $\OR$-gate with countably infinite fan-in, as well as $\BSigma_1$- or $\BPi_1$-circuits $(C_n)_{n \in \NN}$ defined on all of the input variables. Our goal is to find a restriction $\rho$ such that each $T_{ij}^\rho$ is suppressed or preserved, infinitely many $T_{ij}^\rho$'s are preserved for each fixed $i$, and each $C_n^\rho$ is equivalent to a finite circuit.

The challenge of constructing $\rho$ is that we need to assign infinitely many $*$'s in order to ensure that infinitely many $T_{ij}^\rho$'s are preserved for each fixed $i$, but not in a manner that makes any fixed $C_n^\rho$ depend on infinitely many $*$'s. To address this difficulty, we define the \emph{queue sequence} $(i_n)_{n \in \NN} = (0,0,1,0,1,2,0,1,2,3,\dots)$ so that every natural number appears infinitely many times in the sequence. We define $\rho$ in stages, assigning one $*$ at each stage and ensuring that after the $n$-th stage, the circuit $C_n$ depends only on the $n+1$ variables preserved so far. In particular, we do the following for $n = 0, 1, 2, \dots$:
\begin{enumerate}
    \item Preserve some previously untouched $T_{i_nj}$ by preserving one of its inputs and fixing the remaining inputs.
    \item For each of the $2^{n+1}$ possible settings of the preserved variables to $0$ or $1$, do the following:
    \begin{enumerate}
        \item If $C_n$ is unforced under this setting, it must have an unforced $\BSigma_0$/$\BPi_0$ child. Force this child to be overriding by setting finitely many new variables to $0$ or $1$.
        \item For each of the finitely many trees $T_{ij}$ containing a newly assigned input variable, fix all other input variables of $T_{ij}$ so that $T_{ij}$ is suppressed.
    \end{enumerate}
\end{enumerate}
Let $\tilde{\rho}$ denote the resulting partial restriction. Each $C_n^{\tilde{\rho}}$ depends only on the $n+1$ bits preserved in steps $0$ through $n$ and is hence equivalent to a finite circuit. We also have that infinitely many $T_{ij}$'s are preserved for each fixed $i$. By assigning all remaining literals so that every other $T_{ij}$ is suppressed or preserved, we get our desired restriction $\rho$.
\end{proof}

\begin{proof}[Proof of the successor case]

Let $(T_{ij})_{i,j \in \NN}$ be full rank-$(\alpha+1)$ alternating trees with disjoint input sets and $(C_n)_{n \in \NN}$ be $\BSigma_{\alpha+1}$- or $\BPi_{\alpha+1}$-circuits defined on all inputs. We apply the inductive hypothesis to the collection of all children of the $T_{ij}$'s and to the collection of all $\BSigma_\alpha$- or $\BPi_\alpha$-children of the $C_n$'s. We group the rank-$\alpha$ children of the $T_{ij}$'s by their parent so that each $T_{ij}$ has infinitely many preserved children. We therefore have for the resulting restriction $\tilde{\rho}$ that each $T_{ij}^{\tilde{\rho}}$ is equivalent to a full rank-$1$ alternating tree and each $C_n^{\tilde{\rho}}$ is equivalent to a $\BSigma_1$- or $\BPi_1$-circuit. The $\alpha = 1$ case then gives us a further restriction $\rho$ satisfying the desired properties.
\end{proof}

At this point, we have proven the restriction lemma for finite $\alpha$, which in particular suffices to prove the Borel hierarchy theorem for finite-rank Borel sets. This matches Sipser's result in \cite{sipser1983}. His proof does not involve Lemma~\ref{inductive-restriction}, but rather repeatedly applies the $\alpha = 1$ case of Lemma~\ref{inductive-restriction} to the rank-$1$ subcircuits of an alternating tree $T$ and an arbitrary circuit $C$. This reduces the rank of both $T$ and $C$ by $1$.

This technique breaks down when $\alpha$ is infinite, since replacing all rank-$1$ subcircuits of $C$ with finite circuits need not reduce the complexity of $C$. For this reason, we need a powerful inductive hypothesis (i.e.\ Lemma~\ref{inductive-restriction}) which can reduce the complexity by an infinite amount. Additionally, we need a new procedure for finding restrictions when $\alpha$ is a limit ordinal. Our strategy will be similar in spirit to the $\alpha = 1$ case, but instead of repeatedly reducing $\BSigma_1$- or $\BPi_1$-circuits to finite circuits by setting finitely many literals, we will repeatedly apply the inductive hypothesis to reduce entire $\BSigma_\beta$- or $\BPi_\beta$-circuits (with $\beta < \alpha$) to finite circuits. Each application of the inductive hypothesis must be carefully done, in order to ensure that the $T_{ij}$'s all remain more complex than the $C_n$'s.

\begin{proof}[Proof of the limit case]

Suppose $\alpha$ is a limit ordinal, let $(T_{ij})_{i,j \in \NN}$ be full rank-$\alpha$ alternating trees with disjoint input sets, and suppose $(C_n)_{n \in \NN}$ are $\BSigma_{\alpha}$- or $\BPi_{\alpha}$-circuits defined on all inputs. Let $(D_m)_{m \in \NN}$ denote the children of these circuits. We write $S_{ij0},S_{ij1}, \dots$ to denote the children of the $T_{ij}$'s.

Let $\rho_{-1}$ denote the restriction preserving all variables. We will proceed in stages $s = 0,1,2,\dots$ while building a sequence of restrictions $\rho_{-1} \preceq \rho_0 \preceq \rho_1 \preceq \rho_2 \preceq \cdots$. We can view this process as slowly building a restriction by fixing more and more variables over time. At each stage, we will replace each $D_m$ and $S_{ijk}$ with a new circuit equivalent to the old one under $\rho_s$. We will inductively maintain the invariant that each non-forced $S_{ijk}$ computes a tree function and that
\begin{equation*}
    \rank(D_m) < \limsup_{k \rightarrow \infty} \left(\rank\left(S_{ijk}\right)\right)
\end{equation*}
for all $i,j,m \in \NN$. This invariant holds before any stages are carried out, since the right hand side is equal to $\alpha$. Finally, we will also assign at each stage an index $n_s \in \NN \setminus \{n_0, \dots, n_{s-1}\}$ and call $x_{n_s}$ a \emph{permanently preserved} variable. We will ensure that $\rho_s\left(x_{n_r}\right) = *$ for every $r,s \in \NN$.

Define the queue sequence $(i_s,j_s)_{s \in \NN}$ so that every pair in $\NN^2$ appears infinitely many times in the sequence, and do the following for $s = 0, 1, 2, \dots$:
\begin{enumerate}
    \item Choose a subtree $S_{i_sj_sk}$ which has not yet been forced or preserved and preserve it, noting that only a single variable remains unforced. Set $n_s$ to the index of this variable, so that $x_{n_s}$ is permanently preserved.
    \item Let $\beta_s = \rank(D_s)$, and suppress all subtrees $S_{ijk}$ containing no permanently preserved variables and satisfying $\rank(S_{ijk}) \leq \beta_s$. The invariant ensures that infinitely many $S_{ijk}$ are not suppressed for all $i,j \in \NN$.
    \item For each subtree $S_{ijk}$ with rank greater than $\beta_s$ and containing no permanently preserved variables, let $\gamma_{ijk}$ denote the unique ordinal such that $\beta_s + \gamma_{ijk} = \rank(S_{ijk})$. Using Lemma~\ref{embedding} and Lemma~\ref{subtree-restriction}, we can fix some inputs of $S_{ijk}$ so that $S_{ijk}$ is equivalent to an alternating circuit obtained from the tree $T_{\beta_s} + T_{\gamma_{ijk}}$.
    \item Let $\mathcal{F}$ denote the family of all subcircuits of the $D_m$'s with rank at most $\beta_s$. Note that $D_s \in \mathcal{F}$.
    \item Replace each $D \in \mathcal{F}$ with the equivalent circuit
    \begin{equation*}
        \bigvee_{\sigma_0 \cdots \sigma_s \in \{0,1\}^{s+1}} \left(D^{\sigma_0 \cdots \sigma_s} \wedge \bigwedge_{r=0}^s (x_{n_r} = \sigma_r)\right) \text{,}
    \end{equation*}
    where $D^{\sigma_0 \cdots \sigma_s}$ denotes the circuit $D$ with each permanently preserved literal $x_{n_r}$ set to $\sigma_r$.
    \item Now apply the inductive hypothesis to all of the circuits $D^{\sigma_0 \cdots \sigma_s}$ from the previous step and to the subtrees $T_{\beta_s}$, where each group consists of all $T_{\beta_s}$ subtrees of a given tree $T_{\beta_s} + T_{\gamma_{ijk}}$ corresponding to leaves with the same parent. Let $\rho_s$ denote the restriction so obtained. Because the $D^{\sigma_0 \cdots \sigma_s}$'s and $T_{\beta_s}$'s depend only on preserved variables which are not permanently preserved, we may assume that $\rho_{s-1} \preceq \rho_s$ and that $\rho_s(n_r) = *$ for all $0 \leq r \leq s$.
    \item Replace each $D^{\sigma_0 \cdots \sigma_s}$ and $T_{\beta_s}$ with its equivalent simplified version under $\rho_s$.
\end{enumerate}
We make a few observations about the resulting circuit at the end of each stage. First, since each $\left(D^{\sigma_0 \cdots \sigma_s}\right)^{\rho_s}$ is equivalent to a finite circuit, so too is each circuit
\begin{equation*}
    \bigvee_{\sigma_0 \cdots \sigma_s \in \{0,1\}^{s+1}} \left(D^{\sigma_0 \cdots \sigma_s} \wedge \bigwedge_{r=0}^s x_{n_r} = \sigma_r\right)^{\rho_s}
\end{equation*}
Thus each $(D_m)^{\rho_s}$ with $D_m \in \mathcal{F}$ is equivalent to a finite circuit, and otherwise, we have by Lemma~\ref{killing-leaves} that $(D_m)^{\rho_s}$ is equivalent to a circuit of rank at most $\delta_m$, where $\delta_m$ is the unique ordinal satisfying $\beta_s + \delta_m = \rank(D_m)$. Because $D_s \in \mathcal{F}$, we see that $(D_s)^{\rho_s}$ is equivalent to a finite circuit.

For each $S_{ijk}$ which is not permanently preserved and with rank greater than $\beta_s$, the restricted circuit $\left(S_{ijk}\right)^{\rho_s}$ is equivalent to a full alternating tree of rank $\gamma_{ijk}$, with $\beta_s + \gamma_{ijk} = \rank(S_{ijk})$. For every $i,j,m \in \NN$ such that $\rank(D_m) \geq \beta_s$, we therefore have
\begin{equation*}
    \beta_s + \delta_m = \rank(D_m) < \limsup_{k \rightarrow \infty} \left(\rank(S_{ijk})\right) = \beta_s + \limsup_{k \rightarrow \infty} \gamma_{ijk}
\end{equation*}
by right continuity. This means that $\delta_m < \limsup_{k \rightarrow \infty} \gamma_{ijk}$ by right monotonicity, so the invariant is maintained after each step (when $\rank(D_m) < \beta_s$, the invariant reduces to the fact that $0 < \limsup_{k \rightarrow \infty} \gamma_{ijk}$).

Let $\tilde{\rho}$ denote the limiting restriction, refined so that any remaining $S_{ijk}$'s are suppressed or preserved. We then have that each $(T_{ij})^{\tilde{\rho}}$ is equivalent to a full rank-$1$ tree. Since each $(D_m)^{\tilde{\rho}}$ is equivalent to a finite circuit, each $(C_n)^{\tilde{\rho}}$ is equivalent to a $\BSigma_1$- or $\BPi_1$-circuit. Thus we can obtain our desired restriction $\rho$ from the $\alpha = 1$ case to complete the induction in the limit case and prove the theorem.
\end{proof}

\section{Consequences of the restriction lemma}
\label{sec:consequences}

Along with being interesting in its own right, the restriction lemma is a powerful tool for proving lower bounds on Borel rank. We demonstrate this fact by providing new proofs for several classical results in descriptive set theory.

\subsection{The Borel hierarchy theorem}
\label{sec:hierarchy}

\begin{theorem}[Borel hierarchy]
\label{borel-hierarchy}

We have for every $1 \leq \gamma < \omega_1$ that $\BSigma^0_\gamma \neq \BPi^0_\gamma$.

\end{theorem}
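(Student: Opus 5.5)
We argue by contradiction using Corollary~\ref{constant-restriction} with $\beta = 1$. Fix $1 \leq \gamma < \omega_1$ and let $g$ be a $\gamma$-tree function computed by a full alternating tree of rank $\gamma$ whose output gate is an $\OR$-gate. Then $g$ is a $\BSigma_\gamma$-circuit, so by Corollary~\ref{rank-equivalence} the set $B = g^{-1}(\{1\})$ is $\BSigma^0_\gamma$. Suppose for contradiction that $\BSigma^0_\gamma = \BPi^0_\gamma$. Then $B \in \BDelta^0_\gamma$.

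The key step is to produce an ordinal $\alpha$ with $\alpha + 1 \leq \gamma$ and $\alpha + 1 \leq \gamma$ being "one step below" in the right sense; this requires splitting into cases.

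\emph{Case $\gamma = \alpha + 1$ is a successor.} Write $\gamma = \alpha + 1$, so $g$ is an $(\alpha+1)$-tree function. Since $B$ is $\BDelta^0_{\alpha+1}$, its characteristic function $f = g$ is $\BSigma^0_{\alpha+1}$-measurable; the preimages of the open subsets of $\{0,1\}$ are $\emptyset$, $B$, $2^\NN\setminus B$, and $2^\NN$, all in $\BSigma^0_{\alpha+1}$. Apply Corollary~\ref{constant-restriction} with this $\alpha$, $\beta = 1$, and $f = g$. This yields a restriction $\rho$ with $g^\rho$ constant and simultaneously $g^\rho$ a $1$-tree function. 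A $1$-tree function is a countable $\AND$ or $\OR$ of distinct variables and is not constant: setting all of its variables to $0$ gives one value and setting all to $1$ gives the other. This is a contradiction.

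\emph{Case $\gamma$ is a limit.} Here $\gamma$ is not of the form $\alpha + 1$, so we instead use $\alpha = \gamma$ and an $(\gamma+1)$-tree. I expect this to be the main obstacle, since it requires care that $f$ be $\BSigma^0_{\gamma+1}$-measurable while $g$ has rank $\gamma+1$, and naive choices fail. The fix is to compare one level up. Take instead $g'$ a $(\gamma+1)$-tree function with output $\AND$, say. Its children are $\gamma$-tree functions with $\OR$ output, hence $\BSigma^0_\gamma = \BPi^0_\gamma$. So $g'$ computes a countable intersection of $\BPi^0_\gamma$ sets, which is $\BPi^0_\gamma$. Since $\BPi^0_\gamma = \BSigma^0_\gamma$, the set computed by $g'$ is also $\BSigma^0_\gamma \subseteq \BSigma^0_{\gamma+1}$, and its complement is $\BPi^0_\gamma = \BSigma^0_\gamma$. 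Therefore $g'$ is $\BSigma^0_{\gamma}$-measurable, and in particular $\BSigma^0_{\gamma+1}$-measurable.

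Now apply Corollary~\ref{constant-restriction} with $\alpha = \gamma$, $\beta = 1$, and $f = g'$. This gives the same contradiction: $g'^\rho$ is both constant and a $1$-tree function. Note that this argument works uniformly for every $\gamma \geq 1$, not only limits, so the case split can be dropped. The one thing to verify is that a countable intersection of $\BPi^0_\gamma$ sets is $\BPi^0_\gamma$ directly from the definition. This holds because each $\BPi^0_\gamma$ set is an intersection of sets in $\BSigma^0_{\beta}$ for $\beta < \gamma$, and a countable intersection of such countable intersections is again one.
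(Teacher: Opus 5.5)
Your proposal is correct and follows the paper's argument: if $\BSigma^0_\gamma = \BPi^0_\gamma$, a tree function becomes $\BDelta$ at its own level, and Corollary~\ref{constant-restriction} with $\beta = 1$, applied to that tree function against itself, yields a restriction making it both constant and a $1$-tree function. Your passage to a $(\gamma+1)$-tree function, which as you note works uniformly for all $\gamma \geq 1$, just spells out the paper's unstated justification for ``it suffices to treat successor $\gamma$'', namely that $\BSigma^0_\gamma = \BPi^0_\gamma$ forces $\BSigma^0_{\gamma+1} = \BPi^0_{\gamma+1}$.
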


This theorem is a cornerstone result of descriptive set theory which, along with reductions to complete sets, is a key piece of machinery for proving lower bounds on Borel rank. The usual proof requires diagonalizing against universal sets constructed for each class $\BSigma^0_\gamma$ and $\BPi^0_\gamma$. The restriction lemma gives us a purely combinatorial proof:

\begin{proof}

It suffices to prove the claim when $\gamma$ is a successor ordinal, so write $\gamma = \alpha + 1$. Let $g \colon 2^\NN \rightarrow \{0,1\}$ be a $\gamma$-tree function. Since $g^{-1}(\{1\}) \in \BSigma^0_\gamma \cup \BPi^0_\gamma$, it suffices to show that $g^{-1}(\{1\})$ is not in $\BDelta^0_\gamma = \BDelta^0_{\alpha+1}$. Indeed, if this were the case, then $g$ would be $\BSigma^0_{\alpha+1}$-measurable. Applying Corollary~\ref{constant-restriction} with $\beta = 1$ would then give us a restriction $\rho$ with $g^\rho$ simultaneously constant and also a $1$-tree function, which is a contradiction.
\end{proof}

\begin{remark}

This result also implies the Borel hierarchy theorem in any uncountable Polish space $X$, since any such space contains a homeomorphic copy $K$ of the Cantor space. The image of a set which is $\BSigma^0_\gamma \setminus \BPi^0_\gamma$ (for $\gamma \geq 2$) in $2^\NN$ under a homeomorphism $\varphi \colon 2^\NN \rightarrow K$ can easily be shown to be $\BSigma^0_\gamma \setminus \BPi^0_\gamma$ in $X$.

\end{remark}

\subsection{Ramsey-theoretic results}
\label{sec:constant}

The restriction lemma allows us to prove some Ramsey-like properties of Borel functions.

\begin{theorem}
\label{continuous-subcube}

For any Borel function $f \colon 2^\NN \rightarrow 2^\NN$, there exists a restriction $\rho$ on $2^\NN$ preserving infinitely many variables such that $f^\rho$ is continuous.

\end{theorem}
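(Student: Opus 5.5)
The plan is to apply Theorem~\ref{restriction} with a dummy tree function $g$ whose only role is to guarantee that infinitely many variables survive the restriction. First, since $f$ is Borel, the remark following the definition of Borel functions gives an ordinal $\alpha < \omega_1$ such that $f$ is $\BSigma^0_\alpha$-measurable. Every $\BSigma^0_\alpha$ set is also $\BSigma^0_{\alpha+1}$, so $f$ is $\BSigma^0_{\alpha+1}$-measurable.

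Next, take $\beta = 1$ and let $g \colon 2^\NN \rightarrow \{0,1\}$ be any $(\alpha+1)$-tree function. Such a $g$ exists: label $T_{\alpha+1}$ with alternating gates and use distinct positive literals $x_i$ at its countably many leaves. Theorem~\ref{restriction} then yields a restriction $\rho$ such that $f^\rho$ is continuous and $g^\rho$ is a $1$-tree function.

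It remains to check that $\rho$ preserves infinitely many variables. Suppose $\rho$ preserved only finitely many. Then $g^\rho$ would depend on only finitely many inputs, and so it would be computed by a finite circuit. But a $1$-tree function is a single infinite-fan-in $\AND$- or $\OR$-gate over infinitely many distinct positive literals. For definiteness take an $\OR$; the $\AND$ case is dual. Setting all variables outside any finite set to $0$ and one variable $x$ inside the tree to $1$, we see that the function depends on every one of its infinitely many variables. That contradicts dependence on finitely many inputs. Hence $\rho$ preserves at least the infinitely many variables that label the leaves of the rank-$1$ tree computing $g^\rho$.

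The one step needing care is identifying $g^\rho$ with a genuine infinite-fan-in rank-$1$ tree on preserved variables. By definition, $g^\rho$ is a function on $2^{\rho^{-1}(\{*\})}$, and being a $1$-tree function means it is computed by a full alternating tree obtained from $T_1$ with injectively labeled positive literals. Those literals must be preserved variables, and since $T_1$ has $\omega$ leaves, the set $\rho^{-1}(\{*\})$ is infinite. With this observation the argument is immediate, so I do not expect a serious obstacle.
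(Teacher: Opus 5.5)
Your proposal is correct and follows the paper's proof: choose $\alpha$ with $f$ $\BSigma^0_{\alpha+1}$-measurable, apply Theorem~\ref{restriction} with $\beta = 1$ to an $(\alpha+1)$-tree function $g$, and conclude that $\rho$ preserves infinitely many variables. This last step holds because $g^\rho$ is a $1$-tree function, whose $\omega$ injectively labeled leaves must be preserved variables. Your extra care in upgrading from $\BSigma^0_\alpha$- to $\BSigma^0_{\alpha+1}$-measurability and in justifying the final step is fine but not needed beyond what the paper states.
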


\begin{proof}

Fix a Borel function $f$, and choose $\alpha < \omega_1$ such that $f$ is $\BSigma^0_{\alpha+1}$-measurable. Let $g \colon 2^\NN \rightarrow \{0,1\}$ be an $(\alpha+1)$-tree function. Theorem~\ref{restriction} (with $\beta = 1$) gives us a restriction $\rho$ such that $f^\rho$ is continuous while $g^\rho$ is a $1$-tree function. As $g^\rho$ depends on infinitely many variables, $\rho$ must preserve infinitely many variables.
\end{proof}

\begin{remark}

The domain of any restricted function is a subcube of the Cantor space. Thus the above result can be interpreted as saying that, if $f \colon 2^\NN \rightarrow 2^\NN$ is Borel, then $f|_C$ is continuous for some infinite subcube $C \subseteq 2^\NN$.

\end{remark}

\begin{remark}

There are other proofs of Theorem~\ref{continuous-subcube}. One such approach is to use the facts that any Borel function $f \colon 2^\NN \rightarrow 2^\NN$ can be made continuous by restricting to a comeager subset of $2^\NN$ and that every comeager subset of $2^\NN$ contains an infinite subcube. A simple argument involving the Galvin--Prikry theorem \cite{galvin1973borel} also gives us Theorem~\ref{continuous-subcube}. Both of these alternative approaches, however, rely on either topological machinery or heavy theory.

\end{remark}

We also obtain the following Ramsey-like result from the constant restriction lemma:

\begin{theorem}
\label{constant-subcube}

For any Borel function $f \colon 2^\NN \rightarrow \{0,1\}$, there exists a restriction $\rho$ on $2^\NN$ preserving infinitely many variables such that $f^\rho$ is constant.

\end{theorem}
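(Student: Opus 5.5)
The plan mirrors the proof of Theorem~\ref{continuous-subcube}, with Corollary~\ref{constant-restriction} in place of Theorem~\ref{restriction}. Fix a Borel function $f \colon 2^\NN \rightarrow \{0,1\}$. First, choose $\alpha < \omega_1$ such that $f$ is $\BSigma^0_{\alpha+1}$-measurable. This is possible because $f^{-1}(\{1\})$ is Borel of some rank $\gamma$; by the earlier remark, its characteristic function is then $\BSigma^0_{\gamma+1}$-measurable, so we may take $\alpha = \gamma$. Next, let $g \colon 2^\NN \rightarrow \{0,1\}$ be any $(\alpha+1)$-tree function, for instance the function computed by a full alternating tree obtained from $T_{\alpha+1}$ with injectively labeled leaves.

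Apply Corollary~\ref{constant-restriction} with this $\alpha$ and with $\beta = 1$, which satisfies the hypothesis $\beta \geq 1$. This yields a restriction $\rho$ such that $f^\rho$ is constant and $g^\rho$ is a $1$-tree function.

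It remains to check that $\rho$ preserves infinitely many variables. A $1$-tree function is computed by a single $\AND$- or $\OR$-gate with countably infinite fan-in over distinct positive literals. Such a function depends on each of these infinitely many variables: for an $\OR$-gate, setting all other inputs to $0$ makes the output equal to the remaining variable, and dually for an $\AND$-gate. Since $g^\rho$ is a function only of the preserved variables, it depends only on variables in $\rho^{-1}(\{*\})$. Hence $\rho^{-1}(\{*\})$ must be infinite.

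There is no substantive obstacle here, since all the work is done by Corollary~\ref{constant-restriction}. The only care needed is to justify the choice of a successor level $\alpha+1$ for the measurability of $f$, and to note that $\beta = 1$ is permitted in the corollary, so that we get constancy rather than mere continuity.
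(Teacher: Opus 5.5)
Your proposal is correct and is essentially the paper's proof. You choose $\alpha$ so that $f$ is $\BSigma^0_{\alpha+1}$-measurable, apply Corollary~\ref{constant-restriction} with $\beta = 1$ to an $(\alpha+1)$-tree function $g$, and conclude that $\rho$ preserves infinitely many variables because the $1$-tree function $g^\rho$ depends on infinitely many inputs; your two extra justifications (the remark on characteristic functions of rank-$\gamma$ sets, and the check that a $1$-tree function depends on each of its literals) fill in details the paper leaves implicit.
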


\begin{proof}

Choose $\alpha < \omega_1$ such that $f$ is $\BSigma^0_{\alpha+1}$-measurable, and let $g \colon 2^\NN \rightarrow \{0,1\}$ be an $(\alpha+1)$-tree function. Use Corollary~\ref{constant-restriction} (with $\beta = 1$) to obtain a restriction $\rho$ such that $f^\rho$ is constant while $g^\rho$ is a $1$-tree function, which implies that $\rho$ preserves infinitely many variables.
\end{proof}

\begin{remark}

These results are much weaker than the restriction lemma, since they ``forget'' the precise structure of $\rho$ and its relation to $g$. In each proof, we only use the fact that $\rho$ preserves infinitely many variables.

\end{remark}

\subsection{Infinite parity}
\label{sec:parity}

An \emph{infinite parity function} is a function $f \colon 2^\NN \rightarrow \{0,1\}$ such that $f(x) \neq f(y)$ whenever $x_i \neq y_i$ for exactly one $i \in \NN$. It is a classical result in descriptive set theory that no infinite parity function is Borel. Our argument is perhaps the most similar to Furst, Saxe, and Sipser's proof of the finitary analog: that parity is not in $\AC$ \cite{parity1981}.

\begin{corollary}

No infinite parity function is Borel.

\end{corollary}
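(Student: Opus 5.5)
Suppose for contradiction that $f \colon 2^\NN \rightarrow \{0,1\}$ is an infinite parity function and is Borel. The plan is to apply Theorem~\ref{constant-subcube}, the Ramsey-like consequence of Corollary~\ref{constant-restriction}, to $f$. It yields a restriction $\rho$ that preserves infinitely many variables and makes $f^\rho$ constant. We will then show that the parity property is incompatible with $f^\rho$ being constant on a subcube with even a single preserved coordinate.

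The key steps are as follows. First, since $f$ is Borel, the hypotheses of Theorem~\ref{constant-subcube} are met, so we fix such a $\rho$. Second, we pick any preserved index $i$ with $\rho(i) = *$; one exists because infinitely many variables are preserved. Third, we take any $x \in 2^\NN$ that agrees with $\rho$ on the fixed coordinates, and let $y$ be $x$ with bit $i$ flipped. Then $y$ also agrees with $\rho$ on the fixed coordinates, so both $x$ and $y$ lie in the domain subcube of $f^\rho$. They differ in exactly one coordinate, so the parity property gives $f(x) \neq f(y)$. On the other hand, constancy of $f^\rho$ gives $f(x) = f(y)$. This is a contradiction, so no infinite parity function is Borel.

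No step is a genuine obstacle, since all of the weight sits in Theorem~\ref{constant-subcube}, and hence in Corollary~\ref{constant-restriction} with $\beta = 1$. The one point needing care is the interpretation of $f^\rho$: it should be read as a function on $2^{\rho^{-1}(\{*\})}$, and the two inputs obtained by flipping the single preserved bit $i$ are legitimate inputs that differ in exactly one coordinate of the full string. It is also worth remarking that we only need one preserved variable, so this is much weaker than what Theorem~\ref{constant-subcube} provides.
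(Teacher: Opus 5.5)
Your proof is correct and follows the paper's argument: both apply Theorem~\ref{constant-subcube} to get a restriction $\rho$ that preserves infinitely many variables and makes the function constant, and then observe that a parity function cannot be constant on such a subcube. The difference is only cosmetic: the paper compares an arbitrary Borel $f$ with an arbitrary parity $g$ and notes that $g^\rho$ is again a non-constant parity function, whereas you apply the theorem directly to the supposed Borel parity function and make the non-constancy explicit by flipping a single preserved bit.
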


\begin{proof}

Fix a Borel function $f$, and use Theorem~\ref{constant-subcube} to obtain a restriction $\rho$ preserving infinitely many variables for which $f^\rho$ is constant. For any infinite parity function $g$, the restricted function $g^\rho$ is another infinite parity function, which is not constant. Thus $f^\rho \neq g^\rho$, which implies that $f \neq g$.
\end{proof}

\section{Generalization to non-trees}
\label{sec:restriction-for-non-trees}

Theorem~\ref{restriction} tells us that, if $f \colon 2^\NN \rightarrow 2^\NN$ is $\BSigma_{\alpha + 1}$-measurable and $A \subseteq 2^\NN$ is a $\BSigma^0_{\alpha + \beta}$- or $\BPi^0_{\alpha + \beta}$-complete set computed by a tree function, then there is a subcube $C \subseteq 2^\NN$ such that $f|_C$ is continuous and $A \cap C$ is $\BSigma^0_\beta$- or $\BPi^0_\beta$-hard. It is a natural question whether we can take $A$ to be any $\BSigma^0_{\alpha + \beta}$- or $\BPi^0_{\alpha + \beta}$-complete set, not necessarily computed by a tree function. The answer is yes if we allow for $C$ to be any closed set rather than just a subcube:

\begin{theorem}
\label{continuous-non-trees}

Fix ordinals $\alpha, \beta < \omega_1$. Then for any $\BSigma^0_{\alpha + 1}$-measurable function $f \colon 2^\NN \rightarrow 2^\NN$ and $\BSigma^0_{\alpha + \beta}$- or $\BPi^0_{\alpha + \beta}$-complete set $A$, there exists a closed set $C \subseteq 2^\NN$ such that $f|_C$ is continuous and $A \cap C$ is $\BSigma^0_\beta$- or $\BPi^0_\beta$-hard.

\end{theorem}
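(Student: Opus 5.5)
The plan is to reduce the general complete set $A$ to the tree function setting by pulling everything back along a continuous reduction, and then to use Theorem~\ref{restriction} on the pulled-back data. Treat the case where $A$ is $\BSigma^0_{\alpha+\beta}$-complete; the $\BPi$ case is symmetric, or follows by taking complements. Let $g$ be an $(\alpha+\beta)$-tree function whose output gate is an $\OR$-gate, so that $g^{-1}(\{1\})$ is $\BSigma^0_{\alpha+\beta}$. Since $A$ is $\BSigma^0_{\alpha+\beta}$-hard, there is a continuous reduction $\varphi \colon 2^\NN \to 2^\NN$ with $g(x) = 1 \iff \varphi(x) \in A$. The composite $f \circ \varphi$ is $\BSigma^0_{\alpha+1}$-measurable, because preimages of $\BSigma^0_{\alpha+1}$ sets under continuous maps stay $\BSigma^0_{\alpha+1}$ (this is a routine induction on rank).

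Apply Theorem~\ref{restriction} to $f\circ\varphi$ and $g$. This gives a restriction $\rho$ such that $(f\circ\varphi)^\rho$ is continuous and $g^\rho$ is a $\beta$-tree function. Let $K \subseteq 2^\NN$ be the subcube determined by $\rho$; it is closed and homeomorphic to $2^\NN$ via the map $\iota$ that fills in the preserved coordinates. Set $C = \varphi(K)$. It is compact, hence closed. It remains to check two things.

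The continuity of $f|_C$ is the delicate step. We only know that $f\circ\varphi|_K$ is continuous, and $\varphi|_K$ need not be injective, so continuity cannot simply be transported. I would first try to arrange that $\varphi$ is injective, so that $\varphi|_K$ is a homeomorphism onto $C$ by compactness; then $f|_C = (f\circ\varphi|_K)\circ(\varphi|_K)^{-1}$ is continuous. Injectivity can be achieved by replacing the reduction $\varphi$ with an injective one. For example, take $\psi(x) = \varphi(x)$ interleaved with a copy of $x$ placed in coordinates that $g$ ignores. Concretely, the trick is to reduce instead from a tree function $g'$ in which extra dummy coordinates encode $x$. More cleanly, apply Theorem~\ref{restriction} to the pair $(f\circ\varphi, g)$ on the product of $2^\NN$ with itself, identified with $2^\NN$, using the map $(x,y)\mapsto \varphi(x)$. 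I expect the cleanest route is a standard lemma that any set which is hard for a class admits injective continuous reductions from complete sets, obtained by modifying $\varphi$ so that it also records $x$ on a part of the input where $A$-membership is unchanged. This is the main obstacle, and it is the point I would think hardest about. If it cannot be arranged, a fallback is to note that if $f\circ\varphi$ is continuous on $K$ and $\varphi|_K$ is a closed surjection onto $C$, then $f|_C$ is continuous exactly when $f\circ\varphi|_K$ is constant on the fibers, which holds automatically since it factors through $\varphi$. A continuous map that factors through a closed quotient map yields a continuous map on the quotient. So in fact $f|_C$ is continuous without injectivity, because $\varphi|_K \colon K \to C$ is a closed continuous surjection between compact Hausdorff spaces and hence a quotient map. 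I would use this argument and skip injectivity entirely.

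For hardness, $A\cap C$ should be $\BSigma^0_\beta$-hard as a subset of $C$. Take any $\BSigma^0_\beta$ set $S'$. Since $g^\rho$ is a $\beta$-tree function, its set is $\BSigma^0_\beta$-complete by the remark on tree functions, so there is a continuous $\theta$ with $x\in S' \iff g^\rho(\theta(x))=1$. Then $\varphi\circ\iota\circ\theta$ maps into $C$, is continuous, and satisfies $x \in S' \iff \varphi(\iota(\theta(x)))\in A \cap C$. This is the required reduction, which completes the plan.
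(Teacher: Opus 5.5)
Your proposal is correct and is essentially the paper's proof. You pull back along a continuous reduction $\varphi$ from an $(\alpha+\beta)$-tree function $g$, apply Theorem~\ref{restriction} to $f\circ\varphi$ and $g$, set $C=\varphi(K)$, deduce continuity of $f|_C$ because $\varphi|_K$ is a closed continuous surjection and hence a quotient map (so the injectivity detour is indeed unnecessary, as you concluded), and obtain hardness of $A\cap C$ by composing reductions.
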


\begin{proof}

Fix an $(\alpha + \beta)$-tree function $g \colon 2^\NN \rightarrow \{0,1\}$ computing a set in the appropriate class $\BSigma^0_{\alpha + \beta}$ or $\BPi^0_{\alpha + \beta}$ so that there exists a continuous reduction $\varphi \colon 2^\NN \rightarrow 2^\NN$ with $g(x) = 1 \iff \varphi(x) \in A$. The function $f \circ \varphi$ is $\BSigma^0_{\alpha+1}$-measurable, so Theorem~\ref{restriction} gives us a restriction $\rho$ such that $(f \circ \varphi)^\rho$ is continuous and $g^\rho$ is computed by a $\beta$-tree function. Let $C'$ denote the subcube defined by $\rho$, and set $C = \varphi(C')$. Then $C$ is compact and hence closed. Because $(f \circ \varphi)|_{C'}$ is continuous, $\varphi$ is continuous, and $C'$ is compact, a standard topological argument tells us that $f|_C$ is continuous (or one can simply note that $\varphi$ is a quotient map). Finally, since $\varphi|_{C'}$ is a reduction from the $\BSigma_\beta^0$- or $\BPi_\beta^0$-hard set computed by $g^\rho$ to $A \cap C$, the set $A \cap C$ is $\BSigma_\beta^0$- or $\BPi_\beta^0$-hard.
\end{proof}

Applying a similar argument using Corollary~\ref{constant-restriction} gives us the following weaker result:

\begin{theorem}
\label{constant-non-trees}

Fix ordinals $\alpha, \beta < \omega_1$ such that $\beta \geq 1$. Then for any $\BSigma^0_{\alpha + \beta}$- or $\BPi^0_{\alpha + \beta}$-complete set $A$ and set $B \in \BDelta^0_{\alpha+1}$, there exists a closed set $C \subseteq 2^\NN$ such that $A \cap C$ is $\BSigma^0_\beta$- or $\BPi^0_\beta$-hard and $B \cap C$ is either $\emptyset$ or $C$.

\end{theorem}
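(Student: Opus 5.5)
We mirror the proof of Theorem~\ref{continuous-non-trees}, replacing Theorem~\ref{restriction} by Corollary~\ref{constant-restriction}. First fix an $(\alpha+\beta)$-tree function $g \colon 2^\NN \rightarrow \{0,1\}$ whose set $g^{-1}(\{1\})$ lies in the same class as $A$, that is, $\BSigma^0_{\alpha+\beta}$ if $A$ is $\BSigma^0_{\alpha+\beta}$-complete and $\BPi^0_{\alpha+\beta}$ otherwise. By the appropriate choice of the output gate ($\OR$ or $\AND$), such a $g$ exists. Since $A$ is complete for that class, there is a continuous reduction $\varphi \colon 2^\NN \rightarrow 2^\NN$ with $g(x) = 1 \iff \varphi(x) \in A$.

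Next, let $\chi_B$ be the characteristic function of $B$. Since $B \in \BDelta^0_{\alpha+1}$, the function $\chi_B$ is $\BSigma^0_{\alpha+1}$-measurable: the preimages of the open subsets of $\{0,1\}$ are $\emptyset$, $B$, $2^\NN \setminus B$ and $2^\NN$, all of which are $\BSigma^0_{\alpha+1}$. Preimages of $\BSigma^0_{\alpha+1}$ sets under continuous maps are again $\BSigma^0_{\alpha+1}$, which follows by a routine induction on rank since preimages commute with countable unions and intersections and preserve clopen sets. Hence $f = \chi_B \circ \varphi$ is also $\BSigma^0_{\alpha+1}$-measurable. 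Since $\beta \geq 1$, Corollary~\ref{constant-restriction} applies to $f$ and $g$. It yields a restriction $\rho$ such that $f^\rho$ is constant and $g^\rho$ is a $\beta$-tree function.

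Let $C' \subseteq 2^\NN$ be the subcube of inputs consistent with $\rho$. It is closed in the compact space $2^\NN$, hence compact. Put $C = \varphi(C')$, which is compact and thus closed. Every point of $C$ has the form $\varphi(x)$ with $x \in C'$, and $\chi_B(\varphi(x)) = f(x)$ takes the same value for all $x \in C'$. So either $C \subseteq B$ or $C \cap B = \emptyset$, that is, $B \cap C \in \{\emptyset, C\}$.

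It remains to check hardness, which is the only step needing any care. Identify $C'$ with $2^{\rho^{-1}(\{*\})}$, which is homeomorphic to $2^\NN$ because $\rho$ preserves infinitely many variables (a $\beta$-tree function with $\beta \geq 1$ depends on infinitely many variables). Under this identification, $g^\rho$ is a $\beta$-tree function, so it computes a $\BSigma^0_\beta$- or $\BPi^0_\beta$-complete, in particular hard, set $H \subseteq C'$. The map $\varphi|_{C'} \colon C' \rightarrow C$ is continuous, and $x \in H \iff \varphi(x) \in A \iff \varphi(x) \in A \cap C$. Thus $\varphi|_{C'}$ reduces $H$ to $A \cap C$, viewed inside $2^\NN$. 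Composing with reductions to $H$ shows that $A \cap C$ is $\BSigma^0_\beta$- or $\BPi^0_\beta$-hard, which completes the argument.
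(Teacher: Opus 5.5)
Your proposal is correct and follows the paper's intended argument: it mirrors the proof of Theorem~\ref{continuous-non-trees}, using $f = \chi_B \circ \varphi$ and Corollary~\ref{constant-restriction} in place of Theorem~\ref{restriction}, and takes $C = \varphi(C')$. You also supply details the paper leaves implicit, namely the $\BSigma^0_{\alpha+1}$-measurability of $\chi_B \circ \varphi$, why $\rho$ preserves infinitely many variables, and how composing reductions gives hardness of $A \cap C$. All of these check out.
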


\begin{remark}

One can also prove these results classically using a refinement of topology.

\end{remark}

\subsection{Limitations in the infinite setting}

The following counterexample tells us that this relaxation from subcubes to closed sets is necessary when considering $\BSigma^0_\gamma$- or $\BPi^0_\gamma$-complete sets which are not computed by tree functions, even for the weaker result in Theorem~\ref{constant-non-trees}.

\begin{theorem}
\label{counterexample}

For any countable ordinal $\gamma$, there exist Borel functions $f,g \colon 2^\NN \rightarrow \{0,1\}$ such that $f^{-1}(\{1\})$ is open, $g$ computes a $\BSigma^0_\gamma$- or $\BPi^0_\gamma$-hard set, and for any restriction $\rho$ such that $f^\rho$ is constant, the function $g^\rho$ is also constant.

\end{theorem}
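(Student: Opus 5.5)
The plan is an explicit construction. The key point is to choose an open set $f^{-1}(\{1\})$ so that a restriction making $f$ constant is forced into one of two very rigid forms, and then to design $g$ so that it is trivially constant in both forms. I will pair up the variables as $(x_{2k}, x_{2k+1})$ for $k \in \NN$.

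\textbf{Definition of $f$.} Set $f(x) = 1$ iff there is some $k$ with $x_{2k} \neq x_{2k+1}$. This set is a countable union of clopen sets, so $f^{-1}(\{1\})$ is open. Its complement is the closed set $D = \{x : x_{2k} = x_{2k+1} \text{ for all } k\}$.

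\textbf{Definition of $g$.} Fix a $\gamma$-tree function $h$. Set $g(x) = 0$ for $x \notin D$, and $g(x) = h(x_0 x_2 x_4 \cdots)$ for $x \in D$. Then $g$ is Borel, since $D$ is closed and $x \mapsto x_0x_2x_4\cdots$ is continuous. For hardness, use the continuous map $\psi(y) = y_0y_0y_1y_1y_2y_2\cdots$, which lands in $D$ and satisfies $g(\psi(y)) = h(y)$. So $\psi$ reduces the $\BSigma^0_\gamma$- or $\BPi^0_\gamma$-complete set computed by $h$ to $g^{-1}(\{1\})$, and composing reductions shows that $g^{-1}(\{1\})$ is $\BSigma^0_\gamma$- or $\BPi^0_\gamma$-hard.

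\textbf{Classifying restrictions with $f^\rho$ constant.} Let $\rho$ be such a restriction. The step that needs care is the following observation: if no pair is fixed to unequal values, then $\rho$ can be extended to a point of $D$. To do this, handle each pair $(2k, 2k+1)$ separately:
\begin{itemize}
\item if both variables are fixed, they are fixed equal, so nothing needs to be done;
\item if exactly one variable is fixed, set the preserved one to the same value;
\item if both are preserved, set them equal.
\end{itemize}
The resulting point lies in $D$, so $f^\rho$ takes the value $0$ there. This gives two cases.
\begin{itemize}
\item \emph{$f^\rho \equiv 1$.} The observation forces some pair to be fixed unequal. Then every completion of $\rho$ lies outside $D$, so $g^\rho \equiv 0$.
\item \emph{$f^\rho \equiv 0$.} If some pair had a preserved variable, we could choose it to differ from its partner and get $f = 1$. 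So every variable is fixed. Then $g^\rho$ has empty input set, and is trivially constant.
\end{itemize}
In either case $g^\rho$ is constant, which proves the theorem.

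I expect no real obstacle here. The only delicate step is the pairwise case analysis showing that $f^\rho \equiv 1$ forces a fixed unequal pair. Beyond that, one only has to check that the hardness reduction $\psi$ respects the encoding, which holds because $\psi$ maps into $D$ and $g \circ \psi = h$.
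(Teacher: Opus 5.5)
Your proof is correct and is essentially the paper's construction. It uses the same $f$, the same doubling map $y \mapsto y_0y_0y_1y_1\cdots$ as the reduction, and a $\gamma$-tree function on the even coordinates, the only cosmetic difference being that you set $g \equiv 0$ off $D$, whereas the paper takes $g = f \vee \tau(x_0x_2x_4\cdots)$, so that $g \equiv 1$ off $D$. The pairwise extension argument in the case $f^\rho \equiv 1$ is harmless but unnecessary, since $f^\rho \equiv 1$ already says every completion of $\rho$ lies outside $D$.
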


\begin{proof}

We implement the parity gate $\oplus$ on two bits using the formula $x_0 \oplus x_1 = (\overline{x}_0 \AND x_1) \OR (x_0 \AND \overline{x}_1)$ and define the functions $d,h \colon 2^\NN \rightarrow 2^\NN$ by $ d(x) = x_0x_0x_1x_1x_2x_2 \dots$ and $h(x) = x_0x_2x_4 \dots$. Let $\tau \colon 2^\NN \rightarrow \{0,1\}$ be a $\gamma$-tree function. We then set
\begin{equation*}
    f(x) = \bigvee_{i \in \NN} x_{2i} \oplus x_{2i + 1} \quad \text{and} \quad g(x) = f(x) \vee \tau(h(x)) \text{.}
\end{equation*}
The function $f$ is computed by a $\BSigma_1$-circuit, so $f^{-1}(\{1\})$ is open. We also have that $\tau = g \circ d$, and since $d$ is a continuous reduction from the $\BSigma^0_\gamma$- or $\BPi^0_\gamma$-complete set computed by $\tau$ to $g^{-1}(\{1\})$, this means that  $g$ computes a $\BSigma^0_\gamma$- or $\BPi^0_\gamma$-hard set. Finally, suppose $\rho$ is a restriction for which $f^\rho$ is constant. If $f^\rho$ is constantly $1$, then $g^\rho$ is constantly $1$. If $f^\rho$ is constantly $0$, then $\rho$ must fix every $x_i$, implying that $g^\rho$ is constant.
\end{proof}

We can also set $g(x) = f(x) \vee p(h(x))$ for some infinite parity function $p$ to obtain the following related counterexample:

\begin{theorem}
\label{parity-counterexample}

There exists a function $f \colon 2^\NN \rightarrow \{0,1\}$ computing an open set and function $g \colon 2^\NN \rightarrow \{0,1\}$ which is not Borel such that, for any restriction $\rho$ such that $f^\rho$ is constant, the function $g^\rho$ is also constant.

\end{theorem}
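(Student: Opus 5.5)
We reuse the construction from the proof of Theorem~\ref{counterexample}, replacing the tree function $\tau$ by an infinite parity function. Let $\oplus$, $d$ and $h$ be as in that proof. Set
\begin{equation*}
    f(x) = \bigvee_{i \in \NN} x_{2i} \oplus x_{2i+1}, \qquad g(x) = f(x) \vee p(h(x)),
\end{equation*}
where $p \colon 2^\NN \rightarrow \{0,1\}$ is any infinite parity function. Such a $p$ exists: choose a transversal of the equivalence relation ``differ in finitely many coordinates'' and let $p(x)$ be the parity of the finite number of coordinates in which $x$ differs from its class representative. As before, $f$ is computed by a $\BSigma_1$-circuit, so it computes an open set.

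\textbf{$g$ is not Borel.} The map $d$ is continuous and satisfies $h(d(x)) = x$. Moreover $f(d(x)) = 0$, since $d(x)_{2i} = d(x)_{2i+1}$ for every $i$. Hence $g \circ d = p$. If $g$ were Borel, then $p = g \circ d$ would be Borel, because preimages of Borel sets under continuous maps are Borel. This would contradict the corollary in Section~\ref{sec:parity} that no infinite parity function is Borel.

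\textbf{The restriction property.} Let $\rho$ be a restriction with $f^\rho$ constant. There are two cases.
\begin{itemize}
    \item If $f^\rho \equiv 1$, then $g^\rho \equiv 1$ as well, since $g \geq f$ pointwise.
    \item If $f^\rho \equiv 0$, then every pair $(x_{2i}, x_{2i+1})$ must be fixed. If some $x_{2i}$ or $x_{2i+1}$ were preserved, we could set it so that $x_{2i} \oplus x_{2i+1} = 1$, which would make $f = 1$. So $\rho$ fixes every variable, and $g^\rho$ is a function on the one-point space $2^\emptyset$, which is trivially constant.
\end{itemize}

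The only step that needs any care is the existence of an infinite parity function; every other step is a direct repeat of the argument for Theorem~\ref{counterexample}. The transversal construction uses the axiom of choice, which is unavoidable here, since under the definition given such a function is necessarily non-Borel.
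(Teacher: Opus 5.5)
Your proposal is correct and follows the paper's proof exactly: the paper obtains this result by replacing the tree function $\tau$ in the proof of Theorem~\ref{counterexample} with an infinite parity function $p$, and your verification that $g \circ d = p$, together with your two-case analysis of $f^\rho$, supplies precisely the details that the paper leaves to the reader. Your transversal construction of $p$ is a worthwhile addition, since the paper takes the existence of an infinite parity function for granted; only your closing aside is loose, because non-Borelness by itself does not show that choice is needed (what does is that $p^{-1}(\{1\})$ can never have the Baire property, and it is consistent with ZF plus dependent choice that every set of reals does).
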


\begin{remark}

Continuous reductions allow us to circumvent the difficulty of working with sets not computed by tree functions, but they destroy the combinatorial structure of restrictions. These counterexamples tell us that this is a necessary trade-off.

\end{remark}

\subsection{Limitations for constant-depth circuit families}
\label{sec:ac-limitations}

Let $\ACd$ denote the class of problems computable by depth-$d$ polynomial-sized circuit families $(C_n)_{n \in \NN}$, and set $\AC = \bigcup_{d \in \NN} \ACd$. The proof of Theorem~\ref{counterexample} carries over seamlessly to this setting, and using Sipser's result in \cite{sipser1983} that the finite analogs of the infinite tree functions of depth $d$ are not in $\ACdm$ gives us the following result:

\begin{theorem}

For any $d \geq 2$, there exist functions $f_n,g_n \colon \{0,1\}^n \rightarrow \{0,1\}$ for $n \in \NN$ such that $\{f_n\}_{n \in \NN} \in \mathsf{AC}^0_2$, $\{g_n\}_{n \in \NN} \in \ACd \setminus \mathsf{AC}^0_{d - 1}$, and for every $n \in \NN$ and restriction $\rho$ such that $f_n^\rho$ is constant, the function $g_n^\rho$ is also constant.

\end{theorem}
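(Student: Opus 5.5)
We mirror the proof of Theorem~\ref{counterexample} in the finite setting. On $n$ inputs (taking $n$ even; for odd $n$ we ignore the last bit), set
\begin{equation*}
    f_n(x) = \bigvee_{i < n/2} x_{2i} \oplus x_{2i+1} \quad\text{and}\quad g_n(x) = f_n(x) \vee \tau_{n/2}(h(x)),
\end{equation*}
where $h(x) = x_0x_2x_4\cdots x_{n-2}$. Here $(\tau_m)_m$ is Sipser's family of depth-$d$ tree-like functions (the finite analogs of $d$-tree functions) on $m$ inputs, which lie in $\ACd$ but not in $\ACdm$. If these are only defined for particular input lengths $m$, we pad: for other $n$ we use the largest admissible $m \le n/2$ and let $h$ read only the first $2m$ even-indexed bits.

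First, $f_n \in \mathsf{AC}^0_2$. Expanding each $\oplus$ as $(\overline{x}_{2i}\AND x_{2i+1})\OR(x_{2i}\AND\overline{x}_{2i+1})$ and merging the two layers of $\OR$-gates gives a depth-$2$ DNF of size $O(n)$.

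Second, we show $g_n \in \ACd \setminus \mathsf{AC}^0_{d-1}$.

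\emph{Upper bound.} The circuit for $\tau_{n/2}\circ h$ has depth $d$ and polynomial size, since $h$ is just a relabeling of inputs. To compute $g_n$, we place an $\OR$ on top and join in the DNF for $f_n$. If $\tau$'s output gate is an $\OR$, we merge it with this top $\OR$, and the depth stays $d$. If $\tau$'s output gate is an $\AND$, we can choose $\tau$ of the dual type, since Sipser's separation holds for either type by duality; alternatively we absorb the depth-$2$ DNF into the depth-$d$ structure, which is fine because $d \ge 2$.

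\emph{Lower bound.} Suppose $g_n$ had depth-$(d-1)$ polynomial-size circuits. Substitute $x_{2i+1} := x_{2i}$, i.e.\ compose with $d(y) = y_0y_0y_1y_1\cdots$. This makes every $\oplus$ term $0$, so $f_n\circ d = 0$ and $g_n \circ d = \tau_{n/2}$. The substitution only relabels literals and does not increase depth or size. We would therefore obtain depth-$(d-1)$ polynomial-size circuits for $\tau_{n/2}$, contradicting Sipser's result.

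Third, we prove the restriction property. Let $\rho$ be a restriction with $f_n^\rho$ constant. If the constant is $1$, then $g_n^\rho = f_n^\rho \vee (\cdot) \equiv 1$. If the constant is $0$, then every pair $(x_{2i},x_{2i+1})$ is fully fixed. Indeed, suppose some pair had a preserved variable. Then $x_{2i}\oplus x_{2i+1}$ could be made $1$ by choosing that preserved variable appropriately. Meanwhile the all-$0$ value requires every term to vanish on every completion, so in particular $f^\rho_n$ would be nonconstant. Hence $\rho$ fixes every input (aside from an unused padding bit, which neither function depends on), and $g_n^\rho$ is constant. The case of an unused last bit for odd $n$ is harmless for the same reason.

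The main obstacle is bookkeeping rather than ideas. We must match Sipser's theorem exactly: its input lengths, the output-gate type so that the depth of $g_n$ is exactly $d$, and the padding. The combinatorial core transfers verbatim from Theorem~\ref{counterexample}.
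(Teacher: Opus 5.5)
Your proposal is correct and is essentially the paper's argument: the finite transcription of the proof of Theorem~\ref{counterexample} ($f_n$ an OR of pairwise XORs, $g_n = f_n \vee \tau\circ h$, hardness obtained by composing with the doubling map $d$) combined with Sipser's depth separation for the finite tree functions. Your extra bookkeeping is sound provided you take $\tau$ OR-rooted via duality, since absorbing the DNF into an AND-rooted $\tau$ only preserves depth when $d \geq 3$; also, $h$ should read the first $m$ even-indexed bits, not the first $2m$.
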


Following Theorem~\ref{parity-counterexample} instead, we get the following:

\begin{theorem}

There exist functions $f_n,g_n \colon \{0,1\}^n \rightarrow \{0,1\}$ for $n \in \NN$ such that $\{f_n\}_{n \in \NN} \in \mathsf{AC}^0_2$, $\{g_n\}_{n \in \NN} \notin \AC$, and for every $n \in \NN$ and restriction $\rho$ such that $f_n^\rho$ is constant, the function $g_n^\rho$ is also constant.

\end{theorem}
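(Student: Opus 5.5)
We transfer the construction from the proof of Theorem~\ref{counterexample} to the finite setting, using the same formula with an infinite parity function in place of the tree function $\tau$. Fix $n$ and write $m = \lfloor n/2 \rfloor$. For $x \in \{0,1\}^n$ set
\begin{equation*}
    f_n(x) = \bigvee_{i < m} (x_{2i} \oplus x_{2i+1}) \quad \text{and} \quad g_n(x) = f_n(x) \vee \mathrm{PAR}_m(x_0 x_2 \cdots x_{2m-2}) \text{,}
\end{equation*}
where $\mathrm{PAR}_m$ denotes parity on $m$ bits. When $n$ is odd, the last variable is simply ignored. The parity gate is expanded as $(\overline{x}_{2i} \AND x_{2i+1}) \OR (x_{2i} \AND \overline{x}_{2i+1})$. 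Merging the OR-gates then shows that $f_n$ is computed by a depth-$2$ DNF with $2m$ terms of size $2$. Hence $\{f_n\} \in \mathsf{AC}^0_2$.

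The restriction property is proven exactly as in Theorem~\ref{counterexample}. Let $\rho$ be a restriction with $f_n^\rho$ constant. There are two cases.

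\begin{itemize}
    \item If $f_n^\rho \equiv 1$, then $g_n^\rho \equiv 1$.
    \item If $f_n^\rho \equiv 0$, then we claim $\rho$ fixes every variable $x_0,\dots,x_{2m-1}$. Suppose some pair $x_{2i},x_{2i+1}$ has a preserved variable. Then we can set that variable so the pair differs, using an arbitrary value for the other variable if it is also preserved. This makes $f_n = 1$, a contradiction. So every pair is fixed, and $g_n^\rho$ depends on no variables except possibly the ignored last one. The last variable does not occur in $g_n$, so $g_n^\rho$ is constant.
\end{itemize}

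Note that for $m = 0$ the statement is trivial. For $n<2$, in fact, $f_n$ and $g_n$ are both constant.

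The main step is showing $\{g_n\} \notin \AC$. We reduce parity to $g_n$ as in the proof of Theorem~\ref{counterexample}, via the doubling map $d$. Given $y \in \{0,1\}^m$, let $x = d(y) = y_0y_0y_1y_1\cdots$, padded with a $0$ if $n$ is odd. Then every pair agrees, so $f_n(x) = 0$, and $g_n(x) = \mathrm{PAR}_m(y)$.

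Now suppose $\{g_n\}$ were computed by polynomial-size depth-$d$ circuits. Substitute $y_i$ for both $x_{2i}$ and $x_{2i+1}$ in the circuit for $g_{2m}$, and treat the negated literals in the same way. This is pure relabelling of input gates, so it increases neither size nor depth, and it yields polynomial-size depth-$d$ circuits for $\mathrm{PAR}_m$. That contradicts the theorem of Furst, Saxe, and Sipser~\cite{parity1981} that parity is not in $\AC$. Hence $\{g_n\} \notin \AC$.

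The only care needed is in the indexing. The circuit family is indexed by $n$, so the parity circuits obtained are the ones for $m = n/2$ over even $n$. Since every $m$ arises this way, with only polynomial blowup in size, this gives a polynomial-size family for all of parity and the contradiction goes through.
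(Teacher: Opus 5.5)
Your proposal is correct and takes the same route as the paper. The paper obtains this theorem as the finite transcription of Theorem~\ref{parity-counterexample}: it takes $f_n=\bigvee_i x_{2i}\oplus x_{2i+1}$ and $g_n = f_n \vee \mathrm{PAR}(x_0x_2\cdots)$, uses the doubling map to reduce parity to $g_n$, and appeals to Furst--Saxe--Sipser to get $\{g_n\}\notin\AC$. You have also correctly filled in the details the paper leaves implicit: odd $n$, the case where both variables of a pair are preserved, and the reindexing via $n=2m$.
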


Restrictions are certainly a powerful tool for proving the limitations of $\AC$. Just as in the infinite case, though, our counterexamples show that they are not universally applicable for proving lower bounds on arbitrary functions.

\section{Sharpness of the restriction lemma}
\label{sec:sharpness}

When applying the restriction lemma to reduce the complexity of a function $f$ by $\alpha$, we also reduce the complexity of some $(\alpha+\beta)$-tree function $g$ by $\alpha$. A natural question is whether we can reduce the complexity of $f$ without reducing the complexity of $g$, and it turns out that the answer depends on whether the codomain of $f$ is $2^\NN$ or $\{0,1\}$. The first case corresponds to the setting of Theorem~\ref{restriction}, and the result is sharp in this scenario. When $f$ maps into $\{0,1\}$, on the other hand, we are in the setting of Corollary~\ref{constant-restriction}. In this scenario, the result is not sharp.

\subsection{Optimality of Theorem~\ref{restriction}}

The following result tells us that Theorem~\ref{restriction} is completely sharp.

\begin{theorem}

For any ordinals $\alpha,\beta < \omega_1$ and $(\alpha+\beta)$-tree function $g \colon 2^\NN \rightarrow \{0,1\}$, there is a $\BSigma^0_{\alpha+1}$-measurable function $f \colon 2^\NN \rightarrow 2^\NN$ such that, for any restriction $\rho$ such that $f^\rho$ is continuous, the function $g^\rho$ computes a Borel set of rank at most $\beta$.

\end{theorem}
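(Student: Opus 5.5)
The plan is to let $f$ encode, coordinate by coordinate, the value of every low-rank subcircuit of the circuit computing $g$. Then continuity of $f^\rho$ forces all of these subcircuits to collapse to finite circuits, and Lemma~\ref{killing-leaves} bounds the rank of what remains.

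\textbf{Step 1: define $f$.} Fix a full alternating tree $C$ of rank $\alpha+\beta$ computing $g$. Its gates form a countable set. Let $(D_i)_{i \in \NN}$ enumerate all subcircuits of $C$ rooted at gates whose subcircuit has rank at most $\alpha$. If there are only finitely many, pad the list with constant circuits. Define $f \colon 2^\NN \rightarrow 2^\NN$ by $f(x)_i = D_i(x)$.

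\textbf{Step 2: check that $f$ is $\BSigma^0_{\alpha+1}$-measurable.} Each $D_i$ is a $\BSigma_{\gamma}$- or $\BPi_{\gamma}$-circuit for some $\gamma \le \alpha$. By Corollary~\ref{rank-equivalence} it computes a set in $\BSigma^0_\gamma \cup \BPi^0_\gamma \subseteq \BDelta^0_{\alpha+1}$. So the map $x \mapsto f(x)_i$ is computed both by a $\BSigma_{\alpha+1}$-circuit and by a $\BPi_{\alpha+1}$-circuit. Concretely, wrap $D_i$ in a single-child $\OR$- or $\AND$-gate and merge gates as in Remark~\ref{circuit-simplification}, or appeal to Corollary~\ref{rank-equivalence} in both directions. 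Lemma~\ref{measurable-circuits} then gives measurability.

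\textbf{Step 3: use continuity of $f^\rho$.} Suppose $\rho$ is a restriction with $f^\rho$ continuous. Corollary~\ref{continuous-circuits}, applied to $f^\rho$ on the subcube $2^{\rho^{-1}(\{*\})}$ after renaming variables, shows that each coordinate $x \mapsto D_i^\rho(x)$ is computed by a finite circuit. Hence $D^\rho$ is equivalent to a finite circuit for every subcircuit $D$ of $C$ of rank at most $\alpha$.

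\textbf{Step 4: bound the rank of $g^\rho$.} Since $\rank(C) = \alpha + \beta$, Lemma~\ref{killing-leaves}, with its $\beta$ taken to be $\alpha$ and its $\delta$ taken to be $\beta$, shows that $C^\rho$ is equivalent to a circuit of rank at most $\beta$. Corollary~\ref{rank-equivalence} then says that $g^\rho$ computes a Borel set of rank at most $\beta$.

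\textbf{Main obstacle.} The only delicate point is the application of Lemma~\ref{killing-leaves}. I need its hypothesis to quantify over exactly the family $(D_i)$, namely all subcircuits of rank at most $\alpha$, including the literal leaves of rank $0$. That holds by construction. I also need the replaced finite pieces not to spoil the rank count. Finite circuits have rank $0$ after alternation, so replacing the subcircuits and merging gates per Remark~\ref{circuit-simplification} should keep the rank at most $\beta$. The edge cases $\beta = 0$ (where $g$ itself is one of the $D_i$) and $\alpha = 0$ are handled by the same argument.
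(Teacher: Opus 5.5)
Your proof is correct and is essentially the paper's argument. The paper also sets $f(x)_i = C_i(x)$ for an enumeration of all subcircuits of rank at most $\alpha$ of the full tree computing $g$, gets measurability from Lemma~\ref{measurable-circuits}, and uses Corollary~\ref{continuous-circuits} to make every $C_i^\rho$ finite; your Step~4 just makes explicit the appeal to Lemma~\ref{killing-leaves}, with its $\beta,\delta$ taken as $\alpha,\beta$, which the paper leaves implicit when concluding that $g^\rho$ is computed by a $\BSigma_\beta$- or $\BPi_\beta$-circuit.
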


\begin{proof}

Let $C$ be an alternating tree of rank $\alpha+\beta$, and let $\mathcal{F} = \{C_0,C_1,\dots\}$ denote the family of subcircuits of $C$ with rank at most $\alpha$. Define $f \colon 2^\NN \rightarrow 2^\NN$ such that $f(x)_i = C_i(x)$. Lemma~\ref{measurable-circuits} tells us that $f$ is $\BSigma^0_{\alpha+1}$-measurable. If $\rho$ is a restriction such that $f^\rho$ is continuous, then Corollary~\ref{continuous-circuits} tells us that each $C_i^\rho$ is equivalent to a finite circuit. Thus for this restriction $\rho$, the function $g^\rho$ is computed by a $\BSigma_\beta$- or $\BPi_\beta$-circuit. Thus $g^\rho$ computes a set of rank at most $\beta$.
\end{proof}

\begin{remark}

Theorem~\ref{restriction} tells us that $\rho$ may always be chosen so that $f^\rho$ is continuous and $g^\rho$ computes a $\BSigma^0_\beta$- or $\BPi^0_\beta$-complete set, so both results are optimal for certain choices of $f$.

\end{remark}

\subsection{Non-optimality of Corollary~\ref{constant-restriction}}

While Theorem~\ref{restriction} can be viewed as reducing the rank of countably many circuits (each coordinate of $f$ is computed by a $\BSigma_{\alpha+1}$- and $\BPi_{\alpha+1}$-circuit), Corollary~\ref{constant-restriction} only reduces the rank of a single circuit. In this way, Corollary~\ref{constant-restriction} is weaker than Theorem~\ref{restriction}, and it turns out that Corollary~\ref{constant-restriction} is not even sharp. For example, Corollary~\ref{constant-restriction} tells us that for any $\BSigma_1$-circuit $C$ and $2$-tree function $g$, there exists a restriction $\rho$ such that $g^\rho$ is a $1$-tree function and $C^\rho$ is constant. However, we can actually choose $\rho$ such that $g^\rho$ is a $2$-tree function and $C^\rho$ is constant. This is because one can always force $C$ to be constant by setting finitely many bits, and $\rho$ can be obtained by then suppressing finitely many rank-$1$ subcircuits in a full alternating rank-$2$ tree computing $g$. Using this observation in conjunction with Lemma~\ref{inductive-restriction}, one can obtain a similar rank-$1$ improvement over Corollary~\ref{constant-restriction} for larger values of $\alpha$ and $\beta$.

It is therefore clear that Corollary~\ref{constant-restriction} is not sharp. In fact, it is an open problem whether forcing $C^\rho$ to be constant requires the complexity of an appropriate tree function (with rank larger than that of $C$) to drop at all. We pose this question more precisely as follows:

\begin{conjecture}
\label{rank-preservation}

Suppose $\alpha < \omega_1$, a circuit $C$ has rank strictly less than $\alpha$, and $g$ is an $\alpha$-tree function. Then there exists a restriction $\rho$ such that $C^\rho$ is constant and $g^\rho$ is still an $\alpha$-tree function.

\end{conjecture}

There are a few reasons to suspect that the conjecture holds, and we present two partial results in this direction. The first is that such a restriction can always be found when $\alpha \leq 3$. We prove this via an optimized version of the construction we used to prove the $\alpha = 1$ case of Lemma~\ref{inductive-restriction}.

\begin{theorem}

Conjecture \ref{rank-preservation} holds whenever $\alpha \leq 3$.

\end{theorem}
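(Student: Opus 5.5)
The case $\alpha = 0$ is vacuous, since no circuit has rank strictly less than $0$. For $\alpha \in \{1,2\}$ I would argue directly, using two ideas: a single literal can suppress a rank-$1$ subtree, and deleting finitely many children from a full tree of successor rank leaves a full tree of the same rank.

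\emph{Case $\alpha=1$.} Here $C$ is finite, so it depends on finitely many variables. I would fix each of them to the value that makes its leaf suppressed in the rank-$1$ tree computing $g$. Then $C^\rho$ is constant, and $g^\rho$ is the same $\AND$/$\OR$ over the remaining infinitely many literals, which is still a $1$-tree function.

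\emph{Case $\alpha=2$.} Say $C$ is $\BSigma_1$; the $\BPi_1$ case is dual.
\begin{itemize}
\item If every finite child of $C$ is unsatisfiable, then $C \equiv 0$ already and $\rho$ is trivial.
\item Otherwise I fix the finitely many bits that satisfy one child, which forces $C^\rho\equiv 1$. These bits meet only finitely many rank-$1$ children of $g$. In each of them I fix one further untouched literal so that the whole child is suppressed, and fix the remaining variables of those children arbitrarily.
\end{itemize}
The surviving children of the root are untouched full rank-$1$ trees, and there are still $\omega$ many of them, so $g^\rho$ is a $2$-tree function.

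\emph{Case $\alpha=3$.} This is the substantive case. Write $C=\bigvee_n D_n$ with each $D_n$ a $\BPi_1$-circuit (dually for $\BPi_2$). Write $g$ as, say, $\OR$ over rank-$2$ children $E_k$, each an $\AND$ of rank-$1$ $\OR$-trees $F_{k\ell}$. I would first try to make $C^\rho\equiv1$. If some $D_n$ and some restriction $\rho$ give $D_n^\rho\equiv1$ with $g^\rho$ still a $3$-tree function, we are done. If not, I would build $\rho=\bigcup_s\rho_s$ in stages, driving every $D_n$ to $0$, in the spirit of the $\alpha=1$ case of Lemma~\ref{inductive-restriction}. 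I would use a queue sequence over pairs $(k,\ell)$ so that each $E_k$ is visited infinitely often.

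At stage $s$ I would do two things:
\begin{itemize}
\item \emph{Protect.} Commit to one new grandchild $F_{k_s\ell_s}$, not yet touched, and protect a finite set of its literals. Protected literals may only ever be set to their suppressed value $0$, except that one permanently preserved literal per committed grandchild is never fixed.
\item \emph{Kill $D_s$.} Make $D_s\equiv0$ by fixing finitely many new bits that falsify one finite child of $D_s$.
\end{itemize}
Every grandchild $F_{k\ell}$ touched by the new bits that is not committed is suppressed inside its $\AND$ parent. This means forcing it to $1$, which costs one literal. Each committed grandchild keeps its permanently preserved literal, so it stays unforced; at the end the committed grandchildren are preserved and the rest are suppressed. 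Every $E_k$ then retains infinitely many preserved grandchildren, and infinitely many $E_k$ are never suppressed. Using the fact that the remark after the definition of full trees says these trees embed into each other, together with Lemma~\ref{subtree-restriction}, $g^\rho$ is then still a $3$-tree function.

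The main obstacle is the kill step. I must show that $D_s$ can be falsified by bits that set protected literals only to $0$ and never fix a permanently preserved literal. I would get this from the failure of the first alternative. Suppose that, after also allowing the $2^{s+1}$ settings of the permanently preserved literals as in step 5 of the limit case, no child of $D_s$ can be falsified compatibly with the protected literals. Then setting all unprotected variables appropriately and all protected ones to $0$ makes $D_s\equiv1$. I then want to show that this leaves a $3$-tree inside $g^\rho$. That needs the variables fixed so far to touch only finitely many $E_k$, each in finitely many grandchildren, which is exactly what the bookkeeping maintains. Checking that this dichotomy really goes through, in particular that setting all unprotected variables does not override too many $F_{k\ell}$, is where I expect the difficulty to lie. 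I would first try making the protected set at each stage large enough, for instance all literals of the committed grandchildren, so that the contrapositive yields a genuine $3$-subtree on which $D_s\equiv1$.
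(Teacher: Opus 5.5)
\textbf{There is a gap in your $\alpha=3$ case: as designed, the construction ends with $g^\rho$ a $2$-tree function, not a $3$-tree function.}

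Your cases $\alpha\le 2$ are fine and agree with the paper. For $\alpha=3$ your overall strategy is also the paper's. Either some $D_n$ can be forced to the value that forces $C$ while $g$ stays a $3$-tree function, or you drive every $D_n$ to the other value in stages. In the second branch you use a queue and let committed grandchildren receive only their suppressed value $0$.

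The problem is the rank count at the end. Each committed grandchild $F_{k\ell}$ keeps exactly one permanently preserved literal and ends up \emph{preserved}, i.e.\ computing a single variable. So each surviving $E_k^\rho$ is an $\AND$ of single literals, a $1$-tree function, and $g^\rho$ is only a $2$-tree function. No embedding of full trees and no use of Lemma~\ref{subtree-restriction} can recover the lost level. Leaving the other protected literals free at the end does not help either: over infinitely many stages, the kill steps may set all of them to $0$.

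The missing idea is that every committed row must itself accumulate infinitely many stars. The paper runs the queue over pairs $(i,\ell)$, each occurring infinitely often. It uses injective label functions $J_i$, so that each occurrence of $(i,\ell)$ returns to the same row $(i,J_i(\ell))$ and preserves one more variable in it. All other variables of starred rows are only ever set to $0$, and every touched unstarred row is suppressed with an extra $1$. Then each starred row restricts to an $\OR$ of infinitely many free variables, each $E_i$ to an $\AND$ of infinitely many such rows, and $g^\rho$ to a $3$-tree function. Accordingly, the kill step at stage $n$ must enumerate the $2^{n+1}$ settings of all stars placed so far.

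The dichotomy step you left open does go through, and more simply than you feared, because no unprotected variable ever needs to be set. Fix a setting $\sigma$ of the stars. Suppose no finite child of $D_s$ can be falsified by finitely many new bits that put only $0$'s in starred rows. Then, after setting all remaining variables of the finitely many starred rows to $0$, every child of $D_s$ is identically $1$ in the remaining variables, since any falsifying assignment would restrict to a finite one.

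This partial restriction touches only finitely many rows. Any such partial restriction extends to a $\rho$ with $g^\rho$ a $3$-tree function: a starred row forced to $0$ only kills its own $E_k$, which is then suppressed under the top $\OR$. This gives $D_s^\rho\equiv 1$ with $g^\rho$ a $3$-tree function, contradicting the failure of the first alternative. By contrast, setting ``all unprotected variables appropriately,'' as you proposed, would touch infinitely many rows, which is exactly what must be avoided.
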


\begin{proof}

The $\alpha = 1$ case is trivial, and we already addressed the $\alpha = 2$ case. It therefore suffices to prove that, whenever $C$ is a $\BPi_2$-circuit and $g$ is a $3$-tree function computing a set in $\BSigma^0_3$, there exists a restriction $\rho$ for which $C^\rho$ is constant and $g^\rho$ is a $3$-tree function (the other cases are analogous).

To this end, relabel the input variables $x_0,x_1,\dots$ so that $g(x) = \bigvee_{i \in \NN} \bigwedge_{j \in \NN} \bigvee_{k \in \NN} x_{ijk}$. We call the collection $\{x_{ijk} : k \in \NN\}$ of variables the \emph{$(i,j)$-th row}. Note that any partial restriction setting variables in only finitely many rows can be extended to a restriction $\rho$ for which $g^\rho$ is a $3$-tree function. Call a row \emph{starred} if it contains at least one preserved variable. Write $C$ as the conjunction of $\BSigma_1$-circuits $D_n$ for $n \in \NN$. Define the \emph{queue sequence} $(i_n,\ell_n)_{n \in \NN}$ so that every $(i,\ell) \in \NN^2$ appears infinitely many times in the sequence.

We are done if there exists a restriction $\rho$ such that $g^\rho$ is a $3$-tree function and some $D_n^\rho$ is identically $0$, so suppose this is not the case. We will construct a restriction $\rho$ such that $g^\rho$ is a $3$-tree function and every $D_n^\rho$ is identically $1$. We define $\rho$ in stages, assigning one $*$ at each stage and ensuring that after the $n$-th stage, the circuit $D_n$ is forced to $1$. As we construct $\rho$, we also define \emph{label functions} $J_i \colon \NN \rightarrow \NN$. Each $J_i$ will be an injective partial function at finite stages of the construction and a total injective function in the limit. We will ensure that at the end of each stage, the $(i,j)$-th row is starred if and only if $j$ lies in the image of $J_i$. In particular, we do the following for $n = 0, 1, 2, \dots$:
\begin{enumerate}
    \item If $J_{i_n}(\ell_n)$ is not yet defined, choose some $j \in \NN$ not in the range of $J_{i_n}$ such that $\rho$ has not yet been defined on any variables in the $(i_n,j)$-th row. Define $J_{i_n}(\ell_n) = j$.
    \item Preserve an additional variable in the $(i_n,J_{i_n}(\ell_n))$-th row.
    \item For each of the $2^{n+1}$ possible settings of the preserved variables to $0$ or $1$, do the following:
    \begin{enumerate}
        \item Under this setting, it must be possible to force $D_n$ to $1$ while assigning only $0$'s in starred rows (otherwise, setting all remaining variables in the starred rows to $0$ would force $D_n$ to $0$, and we could extend this partial restriction to some $\rho$ with $g^\rho$ being a $3$-tree function and $D_n^\rho$ being identically $0$). Thus one of the $\BSigma_0$/$\BPi_0$-children of $D_n$ can be forced to $1$ by setting finitely many variables, assigning only $0$'s in starred rows.
        \item For each of the finitely many unstarred rows containing a newly assigned variable, assign an additional $1$ in that row so that the row is suppressed.
    \end{enumerate}
\end{enumerate}
Let $\tilde{\rho}$ denote the resulting partial restriction, and note that $\tilde{\rho}$ forces every $D_n^{\tilde{\rho}}$ to $1$. We also see that every starred row contains only $0$'s and $*$'s, and that each starred row contains infinitely many stars. For each fixed $i$, the $(i,j)$-th row is starred for infinitely many $j$, and this means that by extending $\tilde{\rho}$ to a restriction $\rho$ suppressing all other rows, the restricted function $g^\rho$ is a $3$-tree function.
\end{proof}

The second partial result is that the claim holds when the rigid subcubes induced by restrictions are replaced by closed sets (this is the same relaxation we made in Section~\ref{sec:restriction-for-non-trees}). The argument was found by Edward Hou, and it relies heavily on a Hurewicz-type result proven by Louveau and Saint Raymond in \cite{louveau1987}.

\begin{theorem}

Fix $\alpha < \omega_1$ and $A,B \subseteq 2^\NN$ such that $A$ is $\BPi^0_\alpha$-complete and $\rank(B) < \alpha$. Then there exists a closed set $C \subseteq 2^\NN$ such that $A \cap C$ is $\BPi^0_\alpha$-hard and $B \cap C$ is equal to either $\emptyset$ or $C$.

\end{theorem}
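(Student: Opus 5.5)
The plan is to use the Louveau--Saint Raymond Hurewicz-type theorem from \cite{louveau1987}. In the form we need, it says: if $A_0, A_1 \subseteq 2^\NN$ are disjoint and analytic (here Borel), and $A_1$ is not separable from $A_0$ by a $\BSigma^0_\alpha$ set, then there is a continuous injection $\varphi \colon 2^\NN \rightarrow 2^\NN$ with
\begin{equation*}
    \varphi^{-1}(A_1) = H \quad \text{and} \quad \varphi^{-1}(A_0) = 2^\NN \setminus H,
\end{equation*}
where $H$ is a fixed $\BPi^0_\alpha$-complete set. (If the literal statement in the paper has $\BPi^0_\alpha$ and $\BSigma^0_\alpha$ swapped, we adjust by complementation.)

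The first step is to find a good piece of $B$. Since $\rank(B) < \alpha$, both $B$ and $2^\NN \setminus B$ lie in $\BDelta^0_\alpha$. Suppose $A \cap B$ were relatively $\BSigma^0_\alpha$ in $B$, say $A \cap B = B \cap U$ with $U \in \BSigma^0_\alpha$. Suppose likewise $A \setminus B = (2^\NN \setminus B) \cap V$ with $V \in \BSigma^0_\alpha$. Then
\begin{equation*}
    A = (B \cap U) \cup \bigl((2^\NN \setminus B) \cap V\bigr).
\end{equation*}
Because $B \in \BDelta^0_\alpha$ and $\BSigma^0_\alpha$ is closed under finite intersections and unions, this makes $A \in \BSigma^0_\alpha$. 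That contradicts $A$ being $\BPi^0_\alpha$-complete, using the Borel hierarchy theorem (Theorem~\ref{borel-hierarchy}). So, after possibly replacing $B$ by its complement, we may assume $A \cap B$ is not relatively $\BSigma^0_\alpha$ in $B$. In other words, $A_1 = A \cap B$ cannot be separated from $A_0 = B \setminus A$ by any $\BSigma^0_\alpha$ set.

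The second step applies Louveau--Saint Raymond to $A_0$ and $A_1$. This gives a continuous injection $\varphi$ with $\varphi^{-1}(A) = H$ and $\varphi^{-1}(B \setminus A) = 2^\NN \setminus H$. In particular the image $C = \varphi(2^\NN)$ is contained in $B$. It is compact, hence closed, and $B \cap C = C$ (or $\emptyset$ in the complemented case). Since $\varphi$ is a continuous bijection from a compact space onto the Hausdorff space $C$, it is a homeomorphism onto $C$. Now take any $\BPi^0_\alpha$ set $S$, and let $\psi$ be a continuous reduction of $S$ to $H$. Then $\varphi \circ \psi$ is a continuous reduction of $S$ to $A \cap C$. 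Hence $A \cap C$ is $\BPi^0_\alpha$-hard.

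The main obstacle is the first step. We need the correct separation formulation of the Louveau--Saint Raymond theorem, and we need the non-separation hypothesis to hold relative to one of $B$ or $2^\NN \setminus B$. This is exactly where $\rank(B) < \alpha$ enters: it makes $B$ a $\BDelta^0_\alpha$ set, so the two relative separations glue into a global $\BSigma^0_\alpha$ definition of $A$. Some care is needed when $\alpha$ is small (e.g.\ $\alpha \le 2$) or a limit, where the Louveau--Saint Raymond statement has slightly different forms. I expect the general statement for $\alpha \geq 1$ to cover these cases after a routine check.
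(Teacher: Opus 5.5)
Your proposal is correct and is essentially the paper's argument. The paper also splits according to whether $A \cap B$ or $A \setminus B$ fails to be $\BSigma^0_\alpha$, applies Louveau--Saint Raymond to $(A \cap B, B \setminus A)$ or to $(A \setminus B, (A \cup B)^c)$, rules out the separating-set alternative using $B \in \BDelta^0_\alpha$, and takes $C = \varphi(2^\NN)$. One minor point: you never need $\varphi$ to be injective, since continuity together with $\varphi(2^\NN) \subseteq B$ already makes $\varphi \circ \psi$ a reduction to $A \cap C$. That matters because your stated injective form is false for $\alpha = 1$ whenever $A_0 \cup A_1$ is countable, and the paper accordingly invokes the theorem only for $\alpha \geq 3$, dismissing $\alpha \leq 2$ as trivial.
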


\begin{proof}

Because $A \notin \BSigma^0_\alpha$, we must have that either $A \cap B \notin \BSigma^0_\alpha$ or $A \setminus B \notin \BSigma^0_\alpha$. We consider each of these cases separately.

We first suppose that $A \cap B \notin \BSigma^0_\alpha$. We can assume that $\alpha \geq 3$ since the $\alpha \leq 2$ cases are trivial, and this allows us to apply Louveau and Saint Raymond's theorem in \cite{louveau1987} to the sets $A \cap B$ and $B \setminus A$. The theorem tells us that either there exists a $\BSigma^0_\alpha$-set $S$ with $A \cap B \subseteq S \subseteq A \cup B^c$ or there is a continuous map $\varphi \colon 2^\NN \rightarrow 2^\NN$ such that $\varphi(H_\alpha) \subseteq A \cap B$ and $\varphi(H_\alpha^c) \subseteq B \setminus A$, where $H_\alpha$ is some $\BPi^0_\alpha$-complete subset of $2^\NN$. The first alternative cannot hold, since it would imply that $A \cap B = S \cap B \in \BSigma^0_\alpha$. Thus the second alternative holds, and $C = \varphi(2^\NN)$ is closed and entirely contained in $B$. Because $\varphi$ is a continuous reduction from $H_\alpha$ to $A \cap C$, the set $A \cap C$ is $\BPi^0_\alpha$-hard.

Now suppose that $A \setminus B \notin \BSigma^0_\alpha$. Louveau and Saint Raymond's theorem applied to $A \setminus B$ and $(A \cup B)^c$ tells us that either there exists a $\BSigma^0_\alpha$-set $S$ with $A \setminus B \subseteq S \subseteq A \cup B$ or there is a continuous map $\varphi \colon 2^\NN \rightarrow 2^\NN$ such that $\varphi(H_\alpha) \subseteq (A \setminus B)$ and $\varphi(H_\alpha^c) \subseteq (A \cup B)^c$. The first alternative again cannot hold, since we would then have that $A \setminus B = S \setminus B \in \BSigma^0_\alpha$. Thus the second alternative holds, and we see that $C = \varphi(2^\NN)$ is closed and disjoint from $B$. Because $\varphi$ is a continuous reduction from $H_\alpha$ to $A \cap C$, the set $A \cap C$ is again $\BPi^0_\alpha$-hard.
\end{proof}

\begin{remark}

This argument tells us that, if Conjecture \ref{rank-preservation} fails, it must be due to the precise combinatorial structure of restrictions (as was the case for Theorem~\ref{constant-non-trees}).

\end{remark}

\section{Acknowledgments}

I am deeply grateful to Anton Bernshteyn for his mentorship and guidance on this project. He introduced me to descriptive set theory and its connection to circuit complexity, suggested theorems and problems I should look into, and helped me refine my ideas. His insights and support were invaluable.

\printbibliography

\end{document}